\documentclass[a4paper,10pt]{article}

\usepackage[T1]{fontenc}
\usepackage{lmodern}
\usepackage[utf8]{inputenc}
\usepackage{amsmath,amssymb,amsthm}
\usepackage[left=3.5cm,right=3.5cm,top=2cm,bottom=2cm]{geometry}
\usepackage{tikz}
\usepackage{float}
\usepackage{xcolor}
\usepackage{hyperref}
\hypersetup{hidelinks}
\newcommand{\bref}[1]{\textbf{\ref{#1}}}
\newcommand{\la}{\operatorname{la}}
\newcommand{\latop}{\operatorname{la}_{\mathrm{top}}}
\newcommand{\Rays}{\mathcal{R}}

\newtheorem{theorem}{Theorem}[section]
\newtheorem{conjecture}{Conjecture}[section]
\newtheorem{proposition}{Proposition}[section]
\newtheorem{lemma}{Lemma}[section]
\newtheorem{corollary}{Corollary}[section]
\newtheorem{claim}{Claim}[section]
\newtheorem{question}{Question}[section]

\theoremstyle{definition}
\newtheorem{definition}{Definition}[section]

\newtheoremstyle{theoremstar}
{} % Space above
{} % Space below
{\itshape} % Body font
{} % Indent
{\bfseries} % Head font
{.} % Punctuation
{.5em} % Space after head
{\thmname{#1}\thmnote{ \bfseries #3}}
\makeatother

\theoremstyle{theoremstar}
\newtheorem*{theoremstar}{Theorem}

\title{Linear arboricity conjecture for infinite graphs}
\author{Leandro Aurichi, Rodrigo Monteiro and Caio Rodrigues}

\newcommand{\Addresses}{{
		\bigskip
		\footnotesize
		
		L.~Aurichi, Instituto de Ci\^encias Matem\'aticas e de Computa\c c\~ao, Universidade de S\~ao Paulo\\
		Avenida Trabalhador s\~ao-carlense, 400, S\~ao Carlos, SP, 13566-590, Brazil\par\nopagebreak
		\textit{E-mail}: \texttt{aurichi@icmc.usp.br}
	
		\medskip
		R.~Monteiro, Instituto de Ci\^encias Matem\'aticas e de Computa\c c\~ao, Universidade de S\~ao Paulo\\
		Avenida Trabalhador s\~ao-carlense, 400, S\~ao Carlos, SP, 13566-590, Brazil\par\nopagebreak
		\textit{E-mail}: \texttt{rodrigosm@usp.br}
		
			C.~Rodrigues, Instituto de Ci\^encias Matem\'aticas e de Computa\c c\~ao, Universidade de S\~ao Paulo\\
		Avenida Trabalhador s\~ao-carlense, 400, S\~ao Carlos, SP, 13566-590, Brazil\par\nopagebreak
		\textit{E-mail}: \texttt{caiofernandorodrigues@usp.br}
}}
\date{}

\begin{document}

\maketitle

\begin{abstract}
The linear arboricity $\la(G)$ of a graph $G$ is the least cardinality of linear forests, that is, forests of maximum degree at most $2$, into which its edge set $E(G)$ can be decomposed. The Linear Arboricity Conjecture asserts that $\la(G)\leq\lceil(\Delta(G)+1)/2\rceil$ for every finite graph $G$, where $\Delta(G)$ denotes the maximum degree of $G$. We extend this conjecture to infinite graphs of finite maximum degree and prove that its finite and infinite versions are equivalent. We introduce topological linear arboricity $\latop (G)$ by requiring the linear forests to contain no topological circle, and show that it differs from linear arboricity by at most one. Finally, we prove that every $2k$-regular graph of girth at least $2k$ has topological linear arboricity at most $k+1$.
\end{abstract}

\medskip
\noindent\textbf{Key words:} Infinite graphs, linear arboricity conjecture, topological linear arboricity.

\medskip
\noindent\textbf{AMS subject classification:} 05C15, 05C63, 05C70.

\section{Introduction}

 All graphs in this paper are simple, but they are not assumed to be finite. An edge decomposition of a graph is a family of pairwise edge-disjoint subgraphs whose edge sets partition the edge set of the graph.

\begin{definition}
A \emph{ray} is a one-way infinite path, and a \emph{double ray} is a two-way infinite path. We write $\Rays(G)$ for the set of rays of a graph $G$. A \emph{linear forest} is a forest of maximum degree at most $2$. Thus, in an infinite graph, its components are finite paths, rays, or double rays. The \emph{linear arboricity} of $G$, denoted by $\la(G)$, is the least cardinality of an edge decomposition of $G$ into linear forests.
\end{definition}

Linear arboricity was introduced by Harary~\cite{harary1970} for finite graphs. Since every matching is a linear forest and every linear forest is a forest, the parameter lies between arboricity and the edge-chromatic number, linking the decomposition problems addressed by the classical theorems of Nash--Williams and Vizing~\cite{nashwilliams1964,vizing1964}. Linear arboricity can also be viewed as a relaxation of decomposition into Hamilton paths, which are linear forests with the maximum possible number of edges on a given vertex set, see~\cite{christoph2025} for this perspective and recent developments.

For a graph $G$ of finite maximum degree $\Delta(G)$, each linear forest contains at most two edges incident with any given vertex. Consequently,
$
\la(G)\geq \left\lceil\frac{\Delta(G)}{2}\right\rceil.
$
When $G$ is a finite $\Delta(G)$-regular graph, a simple edge-counting argument yields the stronger bound
$
\la(G)\geq \left\lceil\frac{\Delta(G)+1}{2}\right\rceil.
$
Akiyama, Exoo and Harary conjectured that these lower bounds are best possible~\cite{akiyama1981}.

\begin{conjecture}[Linear Arboricity Conjecture~\cite{akiyama1981}]
	\label{conj}
For every finite graph $G$,
\[
\left\lceil\frac{\Delta(G)}{2}\right\rceil\leq \la(G)\leq \left\lceil\frac{\Delta(G)+1}{2}\right\rceil
\]
\end{conjecture}

This conjecture is known to be true when $\Delta(G)\in\{3,4,5,6,8,10\}$,  see~\cite{akiyama1980,akiyama1981,enomoto1984,guldan1986}. It is also known for several structural classes of graphs, see, for example,~\cite{basavaraju2020,gao2024}. Alon proved that the conjecture is asymptotically correct~\cite{alon1988}. The error term was later improved in~\cite{ferber2020,lang2023}, and a recent bound of $\Delta(G)/2+O(\log|V(G)|)$ was obtained in~\cite{christoph2025}. To the best of our knowledge, however, linear arboricity had not previously been considered for infinite graphs. We propose the following extension.

\begin{conjecture}
	\label{arbinf}
For every infinite graph $G$ with finite maximum degree $\Delta(G)$,
\[
\left\lceil\frac{\Delta(G)}{2}\right\rceil\leq \la(G)\leq \left\lceil\frac{\Delta(G)+1}{2}\right\rceil.
\]
\end{conjecture}

The main purpose of Section \ref{secarb} is to show that passing to infinite graphs does not lead to a genuinely different conjecture. More precisely, we prove the following equivalence.

\begin{theoremstar}[\bref{equiv}]
	Conjectures \ref{conj} and \ref{arbinf} are equivalent.
\end{theoremstar}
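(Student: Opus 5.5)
Both lower bounds are immediate: each linear forest contains at most two edges at any vertex, so at a vertex of degree $\Delta(G)$ we need at least $\lceil\Delta(G)/2\rceil$ forests. This holds for finite and infinite $G$ alike. So the equivalence only concerns the upper bounds, and I would prove the two directions separately.

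\emph{Infinite version implies finite version.} Let $G$ be a finite graph with $\Delta(G)=\Delta$. Form $G'$ as the disjoint union of $G$ with countably many isolated vertices, or with infinitely many disjoint copies of $G$. Then $G'$ is infinite and $\Delta(G')=\Delta$. Conjecture \ref{arbinf} gives a decomposition of $G'$ into $\lceil(\Delta+1)/2\rceil$ linear forests. Restricting each forest to the copy of $G$ gives linear forests, since subgraphs of linear forests are linear forests. Hence $\la(G)\le\lceil(\Delta+1)/2\rceil$.

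\emph{Finite version implies infinite version.} This is the substantive direction, and I would use a compactness argument. Let $G$ be infinite with $\Delta(G)=\Delta<\infty$, and set $k=\lceil(\Delta+1)/2\rceil$. A decomposition into $k$ linear forests is a colouring $c\colon E(G)\to\{1,\dots,k\}$ such that each colour class is a linear forest. This means two things:
\begin{itemize}
\item[(i)] every vertex has at most two incident edges of each colour;
\item[(ii)] no colour class contains a cycle.
\end{itemize}
The key point is that both conditions are \emph{finitary}. Condition (i) involves only the finitely many edges at one vertex. Condition (ii) is violated only if some finite cycle is monochromatic, because a graph with maximum degree at most $2$ and no finite cycle is automatically a disjoint union of paths, rays and double rays.

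For every finite subgraph $H\subseteq G$ we have $\Delta(H)\le\Delta$. The finite conjecture then gives a valid $k$-colouring of $E(H)$ into $\lceil(\Delta(H)+1)/2\rceil\le k$ linear forests. Now consider the product space $\{1,\dots,k\}^{E(G)}$, which is compact by Tychonoff. For each finite set $F\subseteq E(G)$, let $C_F$ be the set of colourings that satisfy (i) and (ii) on the subgraph spanned by $F$. Each $C_F$ is closed, since membership depends only on the coordinates in $F$. Each $C_F$ is nonempty, by the finite case. The family has the finite intersection property, because $C_{F_1}\cap\dots\cap C_{F_n}\supseteq C_{F_1\cup\dots\cup F_n}\neq\emptyset$. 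Therefore $\bigcap_F C_F\neq\emptyset$.

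A colouring in this intersection satisfies (i) at every vertex: take $F$ to be the set of edges at that vertex. It satisfies (ii) for every cycle: take $F$ to be the edge set of that cycle. So it is a decomposition of $G$ into $k$ linear forests.

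Alternatively, one could invoke the De Bruijn--Erdős style compactness principle directly. Or, since a graph of finite maximum degree has only countable components, one could handle each component by König's infinity lemma applied to an exhaustion by finite subgraphs.

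The only step needing care is this reduction to finitary conditions. One must check that acyclicity of a colour class is witnessed on finite subgraphs, and that no infinite structure such as a ray or double ray is forbidden. Here rays and double rays are allowed in linear forests by definition, so the reduction works. In the topological variant introduced later it would not.
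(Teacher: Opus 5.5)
Your proof is correct and is essentially the paper's argument. The paper also gets the easy direction from countably many disjoint copies of $G$. For the substantive direction it works in each countable component and takes the $F$-limit, along a fixed non-principal ultrafilter, of decompositions of the finite induced subgraphs $G[\{v_0,\dots,v_n\}]$. That limit is a sequential form of your compactness argument in $\{1,\dots,k\}^{E(G)}$, with the same finitary checks (no monochromatic cycle, no three same-coloured edges at a vertex, every edge gets exactly one colour). Your Tychonoff formulation has the small advantage of handling graphs of arbitrary cardinality at once, without first reducing to countable components.
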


In the same section, we also study extensions of prescribed linear forests of finite graphs to Hamiltonian decompositions. Motivated by a conjecture of Javadi and Miralaei~\cite{javadi2026} for finite complete graphs, we prove the following result for the countable infinite complete graph $K_\omega$. Here, a Hamiltonian double ray is a double ray containing every vertex of the graph.

\begin{theoremstar}[\bref{hamil}]
	Let $H$ be a finite subgraph of $K_\omega$ whose edges are colored with finitely many colors in such a way that every color class induces a linear forest. Then this coloring extends to an edge-coloring of $K_\omega$ with countably many colors such that every color class induces a Hamiltonian double ray.
\end{theoremstar}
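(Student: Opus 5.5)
We build the extension by a back-and-forth construction of length $\omega$. Enumerate the vertices of $K_\omega$ as $v_0,v_1,\dots$ and the edges as $e_0,e_1,\dots$. Let the colors of $H$ be $c_1,\dots,c_m$, and add countably many fresh colors $c_{m+1},c_{m+2},\dots$. Fix a bookkeeping function that lists every pair $(c_i,v_j)$ infinitely often, together with the tasks ``color edge $e_n$''.

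We maintain a finite partial edge-coloring $\varphi_s$ extending the coloring of $H$. The invariant is that, for each color, the colored edges of that color form a \emph{finite linear forest}, and only finitely many colors are in use at stage $s$. The point of the invariant is that a finite linear forest $F$ in $K_\omega$ can always be grown. Since only finitely many vertices are involved in $\varphi_s$, there are infinitely many vertices untouched by any colored edge.

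\textbf{Step 1 (covering a vertex).} Given color $c$ and vertex $v$ not yet in the $c$-forest, pick an endpoint $u$ of some $c$-component (or any untouched vertex if the forest is empty). Add the edge $uv$ in color $c$ if it is uncolored and $v$ has $c$-degree $0$. If $uv$ is already colored, route through a fresh untouched vertex $w$ instead, using the edges $uw$ and $wv$. Both are uncolored because $w$ is untouched, and no cycle is created because $w$ is new and $v$ had $c$-degree $0$.

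\textbf{Step 2 (merging components).} We also connect components: join an endpoint of one $c$-component to an endpoint of another via a fresh vertex. This keeps the forest linear and acyclic while decreasing the number of components.

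\textbf{Step 3 (coloring an edge).} For the task ``color $e_n=xy$'' with $e_n$ uncolored, take a brand new color $c'$ not yet used. Its forest is empty, so $\{xy\}$ is a valid linear forest.

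In the limit, every edge receives exactly one color, by Step 3. Each color class $P_c$ is an increasing union of finite linear forests, hence acyclic with maximum degree at most $2$. It contains every vertex, since each pair $(c,v)$ is treated infinitely often.

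\textbf{Main obstacle: connectedness and being a double ray.} We must ensure $P_c$ is connected and has no vertex of degree $\le 1$ in the limit. Such a defect would leave finite paths or rays instead of a single double ray. To handle this, we schedule merging tasks for each color infinitely often, so that every two vertices eventually lie in the same component, giving connectedness.

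We also add \emph{extension tasks}. For each color $c$ and each stage, we extend both ends of the (eventually unique) $c$-path by fresh vertices, so every vertex acquires degree exactly $2$ in the limit. Here a subtlety appears. Once two components are merged, the merged component has only two endpoints, and later merges must attach at those endpoints. Moreover, the edges of $H$ in color $c_i$ are prescribed, so vertices of $H$ that are interior in the $c_i$-forest already have degree $2$, and they must never be used as attachment points.

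Endpoints always exist, since a finite linear forest has endpoints, and fresh vertices are always available, so every step is feasible. The delicate verification is that no vertex ends with degree $1$. This holds because the endpoints at each stage are extended at a later stage, so in the limit there are none. Hence each $P_c$ is a connected, $2$-regular, acyclic spanning subgraph, that is, a Hamiltonian double ray.
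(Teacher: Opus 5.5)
Your proof is correct, but it is organized differently from the paper's.

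The paper first proves an auxiliary claim. In a countable graph where every vertex has only finitely many non-neighbours, every finite linear forest lies in a spanning double ray. The proof joins the components into one path through common neighbours, then extends the two ends alternately, each time reaching the first missing vertex of a fixed enumeration. The paper then builds the color classes \emph{one at a time}, each as a complete spanning double ray of the residual graph:
\begin{itemize}
\item first $R_1,\ldots,R_k$ through the prescribed forests, deleting the later prescribed forests $F_j$ ($j>i$) while building $R_i$ so that they stay available;
\item then $R_{k+1},R_{k+2},\ldots$, each through the least-indexed uncovered edge.
\end{itemize}
Removing finitely many spanning double rays and finitely many edges preserves the finitely-many-non-neighbours hypothesis, so the claim applies at every step.

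You instead run a single dovetailed construction in which all colors grow simultaneously through finite approximations. What this buys you is that at every stage only finitely many edges are colored. Untouched vertices are therefore always available as connectors, and you never need to analyse a residual graph. Nor do you need to protect later prescribed forests, since the edges of $H$ are colored at stage $0$ and never recolored. The paper's route, by contrast, isolates one reusable lemma and keeps the verification for each double ray local, with very simple bookkeeping.

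The price of your route is heavier bookkeeping and a limit argument that relies on one fact you assert (``eventually unique'') but do not justify. You should state it explicitly:
\begin{itemize}
\item Once a color is started, no step increases its number of components. Step 1 and the extension tasks attach only at existing endpoints, and Step 3 only touches a fresh color.
\item Hence finitely many merging tasks make each class a single path for good, which gives connectedness.
\item Every endpoint at a given stage becomes interior at the next extension task, which gives $2$-regularity.
\end{itemize}
The dovetailing must also range over all colors $c_1,c_2,\ldots$, including those first used by Step 3. Since you pre-list them all, this is fine.
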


The second part of the paper turns to a phenomenon specific to infinite graphs. We begin by recalling the necessary notions concerning ends and their topology.

\begin{definition}
Let $G$ be a graph. A \emph{tail} of a ray $r$ is an infinite connected subgraph of $r$. Define a relation $\sim$ on $\Rays(G)$ by declaring two rays $r,s\in\Rays(G)$ equivalent if, for every finite set $S\subseteq V(G)$, some tails of $r$ and $s$ lie in the same component of $G-S$. An \emph{end} of $G$ is an equivalence class of this relation, and we write
$
\Omega(G)=\Rays(G)/{\sim}
$
for the set of ends of $G$.

\end{definition}

Regard every edge of $G$ as a copy of the unit interval $[0,1]$ and let $|G|=G\cup\Omega(G)$. For a finite set $S\subseteq V(G)$ and $[r]\in\Omega(G)$, let $C(S,[r])$ be the component of $G-S$ containing a tail of $r$. A basic neighborhood of $[r]$ consists of $C(S,[r])$, all ends having a ray with a tail in this component, and half-open intervals on the edges from $S$ to $C(S,[r])$. Together with the open sets of the $1$-complex of $G$, these sets define the end topology on $|G|$. When $G$ is connected and locally finite, this space is the Freudenthal compactification of $G$.

Ends were introduced by Freudenthal in his study of topological spaces and groups~\cite{freudenthal1931}, and Halin later gave their graph-theoretic formulation in terms of equivalence classes of rays~\cite{halin1964}.
 The topology of $|G|$ allows infinite subgraphs to exhibit phenomena that do not occur in the finite setting. In particular, a linear forest may contain double rays whose closure in $|G|$ is homeomorphic to the circle $\mathbb{S}^1$, with its usual topology. Such a subspace is called a \emph{topological circle}, and it is \emph{Hamiltonian} if it contains every vertex of $G$. This motivates the following topological variant of linear arboricity.

\begin{definition}
	\label{toparb}
For a subgraph $L\subseteq G$, let
$$
\Omega_G(L)=\{[r]\in\Omega(G): L\text{ contains a ray belonging to }[r]\},
$$
and endow $|L|=L\cup\Omega_G(L)$ with the subspace topology inherited from $|G|$. A \emph{topological linear forest} of $G$ is a linear forest $L$ such that $|L|$ contains no topological circle. The \emph{topological linear arboricity} of $G$, denoted by $\latop(G)$, is the least cardinality of an edge decomposition of $G$ into topological linear forests.
\end{definition}

We next compare linear arboricity with  topological linear arboricity. Although a linear forest may give rise to topological circles in $|G|$, this phenomenon has only a limited effect on the number of forests required: as we show in Section \ref{sectop}, the two parameters differ by at most one.

\begin{theoremstar}[\bref{count}]
	 Let $G$ be a graph. If $G$ has finite maximum degree $\Delta(G)$, then $\la(G)\leq\latop(G)\leq\la(G)+1$. If $\la(G)$ is infinite, then $\latop(G)=\la(G)$.
\end{theoremstar}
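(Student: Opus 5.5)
The inequality $\la(G)\leq\latop(G)$ is immediate, since every topological linear forest is a linear forest. All the work is in the upper bounds. The idea is to start from an optimal decomposition into linear forests and destroy every topological circle by deleting edges, then absorb the deleted edges into new forests.

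For the finite-degree case, let $L_1,\dots,L_k$ be linear forests decomposing $E(G)$ with $k=\la(G)$. Observe first that a topological circle in $|L_i|$ must be the closure of a double ray, or of a union of double rays, together with ends. The reason is that the components of $L_i$ are paths, rays and double rays, and a circle cannot pass through a vertex of degree $\le 1$ in $L_i$ except via arcs. So it suffices to make sure no double ray (or chain of double rays) closes up through ends.

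In each component $D$ of $L_i$ that is a double ray, I would choose a single edge $e_D$. I would choose these edges so that, for each vertex $v$, at most two chosen edges (over all $i$) are incident to $v$, and the chosen edges form a linear forest $M$. Deleting $e_D$ from $D$ leaves two rays. Then $L_i'=L_i-M$ has only paths and rays as components. A topological circle in $|L_i'|$ would still be possible through ends joining several rays, so more care is needed.

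The cleaner route is to pick edges so that each $L_i'$ has components that are finite paths or rays, and so that no end is the end of two rays of $L_i'$. Then every arc in $|L_i'|$ lies in the closure of a single component plus at most one end, and a ray plus one end is an arc, not a circle. This cannot be arranged by a single edge deletion, because many disjoint rays of $L_i$ may converge to the same end.

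Instead, I would use locally finite structure. Since $\Delta(G)<\infty$, each component of $G$ is countable. Fix an enumeration $v_0,v_1,\dots$ of each component and the distance layers $S_n$ from a root. For each $i$, delete from $L_i$ the edges of $L_i$ joining $S_{n}$ to $S_{n+1}$ for $n$ in a sparse set $N_i$, say $n\equiv i \pmod{3k}$. This cuts every component of $L_i$ into finite pieces lying between consecutive cut layers, so $L_i'$ has only finite paths and trivially contains no topological circle.

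The deleted edges $F=\bigcup_i (L_i\setminus L_i')$ must form one new topological linear forest. Each vertex lies in at most one layer pair $(S_n,S_{n+1})$ with $n\in N_i$ for a given $i$. The choice of residues ensures that the layers used by different $i$ are pairwise far apart. Hence each vertex meets deleted edges from only one $i$, at most two of them (its $L_i$-degree). The deleted edges all go between consecutive layers $S_n,S_{n+1}$ with the same $n$, so they are edges between two adjacent layers. Consequently $F$ has maximum degree $\le2$ and each component is contained in $S_n\cup S_{n+1}$ for a single $n$, hence finite. Then $F$ is a linear forest only if it is acyclic. A cycle in $F$ inside $L_i$ is impossible because $L_i$ is a forest. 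Therefore $F$ is a finite-component linear forest, in particular topological. This gives $\latop(G)\le k+1$.

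The main obstacle is this layer argument. Two points need checking: that a disconnected $G$ is handled by doing the construction componentwise, and that a component of $L_i'$ really is finite, which holds because it avoids crossing cut layers and each layer is finite by local finiteness.

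For infinite $\la(G)=\kappa$, decompose into $\kappa$ linear forests $L_\alpha$. Apply the same cutting to each $L_\alpha$, now in an arbitrary well-ordered fashion. Alternatively, split each $L_\alpha$ into its edges of even and odd position along each component. These are matchings, which trivially contain no circle. This yields at most $2\cdot\kappa=\kappa$ topological linear forests, so $\latop(G)=\la(G)$.
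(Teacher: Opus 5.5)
\textbf{Verdict.} Your finite-degree argument is correct in substance and takes a genuinely different route from the paper. As written, though, it has one concrete slip. With $N_i=\{n: n\equiv i \pmod{3k}\}$, the cut indices of colours $i$ and $i+1$ are consecutive integers, so the layers used by different colours are \emph{not} far apart. For example, a vertex of $S_2$ may have both of its $L_1$-edges going down to $S_1$ (deleted, since $1\in N_1$) and both of its $L_2$-edges going up to $S_3$ (deleted, since $2\in N_2$). That vertex then has degree $4$ in $F$.

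\textbf{The fix.} What you actually use is that the $N_i$ are pairwise disjoint and infinite, and that no two elements of $\bigcup_i N_i$ are consecutive. The choice $N_i=\{n:n\equiv 3i \pmod{3k}\}$ does this. With it, for $v\in S_m$ at most one of $m-1,m$ is a cut index, so all $F$-edges at $v$ come from a single $L_i$ and a single layer pair. Adjacent $F$-edges therefore share both $i$ and $n$, so each component of $F$ lies in one $L_i$ inside $S_n\cup S_{n+1}$ and is a finite path. You should also say explicitly that every edge joins equal or consecutive distance layers. That is what forces a component of $L_i'$ meeting both $S_{\le n}$ and $S_{\ge n+1}$, with $n\in N_i$, to contain a deleted edge.

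\textbf{Comparison with the paper.} The paper only touches double-ray components. It applies Lemma \ref{lema2} (with $M=\varnothing$, using countability of components) to get a matching $N$ containing an edge of every double-ray component. It then uses Lemma \ref{lema1}: a circle in $|L|$ uses exactly two edges of $L$ at each of its vertices. Hence it cannot contain an edge of a finite-path or ray component, since following that component it would reach a vertex of degree at most one in $L$.

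So the worry that made you abandon your first idea (rays of $L_i'$ closing up through a common end) is unfounded: deleting one edge per double ray already suffices. Your remark that every arc of $|L_i'|$ lies in one component plus one end is also false, since two disjoint rays in one end together with that end form an arc. You do not need it, however.

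Your layer construction avoids the end topology entirely and proves something stronger: a graph of finite maximum degree decomposes into $\la(G)+1$ linear forests all of whose components are finite paths. The price is that it needs finite distance layers, and the extra forest is a union of finite paths rather than a matching. The paper's route keeps the extra forest a matching, and its transversal lemma is reused in Theorem \ref{girth}.

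\textbf{The infinite case.} For infinite $\la(G)$, drop the suggestion to reuse the cutting. In that case $\Delta(G)$ is not finite (finite $\Delta$ gives finite $\la$, e.g.\ via a proper edge-colouring), so distance layers may be infinite and components need not be countable. Your alternative is correct. Properly $2$-edge-colour every path, ray and double ray, so each $L_\alpha$ is the union of two matchings. These contain no rays and hence no circles, which gives $\latop(G)\le 2\kappa=\kappa$.

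This is more robust than the paper's one-line treatment of the infinite case. That treatment reuses the matching $N$, whose existence was justified via countability of components derived from finite maximum degree. For an uncountable component, a matching meeting every double-ray component of a given optimal decomposition need not exist.
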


We also consider even-regular graphs of large girth. Mishra proved that every finite $2k$-regular graph of girth at least $2k$ has linear arboricity at most $k+1$~\cite[Theorem 3]{mishra2025}. In Section \ref{sectop}, we extend this bound to the topological linear arboricity.

\begin{theoremstar}[\bref{girth}]
	Let $G$ be a $2k$-regular graph of girth at least $2k$. Then $\latop(G)\leq k+1$.
\end{theoremstar}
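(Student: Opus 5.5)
We need to prove: $G$ a $2k$-regular graph (possibly infinite) of girth at least $2k$ has $\latop(G)\le k+1$. The plan is to follow Mishra's finite strategy, but with the infinite ingredients replaced by compactness and orientation arguments, and then to control topological circles separately. First, $G$ has finite degree, so every component is countable. Each component of an infinite $2k$-regular graph admits an orientation with all in- and out-degrees equal to $k$: in the locally finite case this follows by compactness from the finite Eulerian case, applying the Rado selection principle to finite pieces (or by an Euler-type decomposition into cycles and double rays). This yields a decomposition of $G$ into $k$ spanning $2$-regular subgraphs, or more precisely $k$ "factors" $F_1,\dots,F_k$, each a disjoint union of cycles and double rays. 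We obtain them by viewing the orientation as a $k$-regular bipartite graph (out-copies against in-copies) and applying König's theorem for infinite regular bipartite graphs of finite degree. The latter is available through compactness, or through the equivalence technique of Theorem~\bref{equiv}.

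Each $F_i$ fails to be a topological linear forest for two reasons: it may contain finite cycles, and its double rays may close up into topological circles. The plan is to break both by deleting a matching-like set of edges, and to collect all deleted edges into one extra forest $L_{k+1}$. Concretely, we choose for each $i$ a set $M_i\subseteq E(F_i)$ meeting every cycle of $F_i$ and every double ray of $F_i$. In each double ray we take one edge, which breaks it into two rays, and a pair of rays is never a circle. A ray alone is not a topological circle either, since its closure is an arc. So each $F_i-M_i$ is a topological linear forest. We need $M=\bigcup_i M_i$ to form a topological linear forest. It suffices that $M$ is an induced matching, or at least that $M$ is a linear forest with finite components, because then no ends are involved. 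The girth condition is used exactly here, as in Mishra. Every cycle of $F_i$ has length at least $2k$, so it offers at least $2k$ candidate edges, while each edge conflicts with at most $O(k)$ edges chosen in the other factors. More precisely, a vertex lies in $k$ factors, and we want each vertex covered by at most two edges of $M$ with no finite cycle in $M$.

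We will make the choice greedily in finite stages and then pass to the limit by compactness. Fix an enumeration of the (countably many per component) cycles and double rays of all the $F_i$. A finite subfamily can always be handled: we pick edges one at a time, each time avoiding edges whose endpoints already have $M$-degree $2$. Since $M$ has bounded degree at most $2$ and finitely many components, it stays a disjoint union of paths as long as we also forbid closing a cycle. The counting uses the girth: a cycle of length $\ge 2k$ has at least $2k$ edges, and the number of vertices already saturated is bounded, so a free edge remains. A double ray has infinitely many edges and always has a free edge. The finite partial choices form a compact (Tychonoff) space of finite-character constraints, so a global $M$ exists in which every vertex has $M$-degree at most $2$ and $M$ has no cycles. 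Finally, components of $M$ might be rays or double rays forming circles. We avoid this by additionally requiring that every component of $M$ have length at most some bound (say at most $2$ edges). This is a local condition, preserved under the compactness limit, and it makes $M$ a topological linear forest.

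The main obstacle is the counting step: showing that a free edge always exists on each short cycle under the strengthened local constraints (bounded $M$-component size, together with the $k$ factors through each vertex). I would first try Mishra's exact local argument, choosing edges far apart along each cycle and using girth $\ge 2k$ to guarantee enough room. If the bound on component length makes the count fail, I would instead allow $M$ to be an arbitrary linear forest with finite components. Its closure then contains no ends, so it has no topological circle, and only the acyclicity and degree conditions remain; these are again finite-character.
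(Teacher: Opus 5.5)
The gap is in the step you flag as the main obstacle, and that step carries the whole combinatorial content of the theorem. Your greedy count does not work. Take $k\ge 3$ and a cycle $C=v_1v_2\cdots v_{2k}$ of $F_1$. Suppose the $F_2$- and $F_3$-components through $v_1,v_3,\ldots,v_{2k-1}$ are pairwise distinct. Nothing in your rule stops the greedy from having already processed them and chosen their edges at these vertices. Those $k$ vertices then have $M$-degree $2$ and form a vertex cover of $C$, so every edge of $C$ is blocked. The girth does not save the count: each edge of $C$ is adjacent to $4(k-1)$ edges of other factors, so the conflicts of the $2k$ edges of $C$ add up to order $k^2$, not $O(k)$.

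There is a second problem with the order of choices. You fix an arbitrary $2$-factorization first, but Mishra's Lemma \ref{lem} produces the factorization \emph{together with} the matching. You have no argument that every $2$-factorization admits such a transversal. For matchings the count is already tight at girth $2k$: if all factor cycles of a finite such graph have length $2k$, the matching must be perfect with exactly one edge on each cycle.

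The paper does not reprove this finite fact; it transfers it.
\begin{itemize}
\item It embeds $H_n=G[\{v_0,\ldots,v_n\}]$ as an induced subgraph of a finite $2k$-regular graph $G_n$ of girth $\ge 2k$ (Lemma \ref{lem2}).
\item It applies Lemma \ref{lem} in $G_n$ and takes $F$-limits of the factors and the matchings simultaneously.
\item The limits $F_i$ are $2$-factors, because eventually $H_n$ contains every neighbour of $v$, so no completion edge touches $v$.
\item The limit $M$ is a matching meeting every finite cycle of every $F_i$. A finite cycle is a finite configuration, so it lies in $Q_i^n$ for an $F$-large set of indices $n$.
\end{itemize}

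Your compactness step also has two defects.
\begin{itemize}
\item The condition that $M$ contains an edge of a given double ray $R$ is open, not closed, in $2^{E(G)}$. The edge chosen on $R$ at finite stages can escape to infinity and be lost in the limit.
\item ``Every component of $M$ is finite'' is not of finite character. The finite initial segments of a double ray have finite components, but their limit does not. So your fallback can produce a double-ray component of $M$ whose two tails lie in the same end, which is a topological circle. Only a uniform bound on component length survives the limit.
\end{itemize}

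The paper handles double rays after the limit, using the direct recursion of Lemma \ref{lema2}. It enumerates the double-ray components not met by $M$. For each one it picks an edge avoiding the endpoints of the edges of $N$ chosen so far and their $M$-mates. This keeps every component of $L=M\cup N$ at most three edges long. With that, your final assembly of the forests $F_i-L$ and $L$ via Lemma \ref{lema1} coincides with the paper's.
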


We  conclude the paper with an open question on the optimal general bound for topological linear arboricity, together with a problem concerning decompositions of cubic graphs into isomorphic linear forests.

\section{Linear arboricity}

\label{secarb}
The assumption that $\Delta(G)$ is finite in Conjecture \ref{arbinf} is essential. Indeed, every graph with unbounded vertex degrees has infinite linear arboricity. Otherwise, if $E(G)$ could be decomposed into finitely many linear forests $F_1,\ldots,F_k$, then each vertex would have degree at most $2k$, since $d_{F_i}(v)\leq2$ for every $i$. Thus $\Delta(G)\leq2k$, a contradiction. In particular, local finiteness alone does not guarantee finite linear arboricity.

The tool used to pass from Conjecture \ref{conj} to Conjecture \ref{arbinf} is the notion of  $F$-limit.

\begin{definition}
A filter $F$ on $\mathbb{N}$ is a nonempty family of subsets of $\mathbb{N}$ such that
\begin{enumerate}
\item $\varnothing\notin F$;
\item if $A\in F$ and $A\subseteq B\subseteq\mathbb{N}$, then $B\in F$;
\item if $A,B\in F$, then $A\cap B\in F$.
\end{enumerate}
A filter is an \emph{ultrafilter} if it is maximal under inclusion, and it is \emph{non-principal} if it contains no finite set.
\end{definition}

We will use the following basic properties of an ultrafilter $F$ on $\mathbb{N}$. For every $A\subseteq\mathbb{N}$, exactly one of $A$ and $\mathbb{N}\setminus A$ belongs to $F$. Consequently, if a finite union of sets belongs to $F$, then at least one of those sets belongs to $F$. If $F$ is non-principal, it contains every cofinite subset of $\mathbb{N}$ and no finite subset. In particular, every finite intersection of members of $F$ belongs to $F$ and is infinite.
Fix a non-principal ultrafilter $F$ on $\mathbb{N}$ for the remainder of the paper.

\begin{definition}
Let $G$ be a graph and let $\langle H_n\rangle_n$ be a sequence of subgraphs of $G$. Its $F$-limit is the subgraph $H\subseteq G$ with
\[
V(H)=\{v\in V(G):\{n\in\mathbb{N}:v\in V(H_n)\}\in F\}
\]
and
\[
E(H)=\{e\in E(G):\{n\in\mathbb{N}:e\in E(H_n)\}\in F\}.
\]
\end{definition}

Informally, the $F$-limit consists of those vertices and edges that occur in sufficiently many members of the sequence, where the ultrafilter determines the meaning of ``sufficiently many'' and ensures compatibility among finitely many such choices. This approach was first used in the context of infinite graphs in~\cite{aurichi2026limits} and has since been applied to problems concerning cycle covers and orientations~\cite{aurichi2025cycle,aurichi2026orientations}.
 In our setting, each graph in the sequence comes with a decomposition into linear forests, and taking the $F$-limit produces coherent limiting color classes.

\begin{theorem}
	\label{equiv}
Conjectures \ref{conj} and \ref{arbinf} are equivalent.
\end{theorem}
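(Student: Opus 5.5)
The lower bound $\lceil\Delta(G)/2\rceil\le\la(G)$ holds in both conjectures by the degree count already given in the introduction. So only the upper bounds matter, and we prove the two implications separately.

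\emph{Conjecture \ref{arbinf} implies Conjecture \ref{conj}.} Given a finite graph $G$ with $\Delta=\Delta(G)$, let $G'$ be the disjoint union of $G$ with countably many isolated vertices, or with infinitely many disjoint copies of $G$. Then $G'$ is infinite and $\Delta(G')=\Delta$. Conjecture \ref{arbinf} gives a decomposition of $G'$ into $\lceil(\Delta+1)/2\rceil$ linear forests. Restricting each forest to $G$ yields linear forests that decompose $E(G)$, since subgraphs of linear forests are linear forests.

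\emph{Conjecture \ref{conj} implies Conjecture \ref{arbinf}.} Let $G$ be infinite with $\Delta=\Delta(G)<\infty$ and set $k=\lceil(\Delta+1)/2\rceil$.

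1. \textbf{Countable case.} Assume first that $V(G)$ is countable and enumerate it, so that $G_n=G[\{v_1,\dots,v_n\}]$ is an increasing sequence of finite induced subgraphs with union $G$. Each $G_n$ satisfies $\Delta(G_n)\le\Delta$. By Conjecture \ref{conj}, each $G_n$ therefore has a decomposition into linear forests $L^n_1,\dots,L^n_k$, where we allow empty forests if fewer are needed. The bound is monotone in $\Delta$, so using $k$ forests is fine.

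2. \textbf{Limit forests.} For each $i\le k$, define $L_i$ as the $F$-limit of $\langle L^n_i\rangle_n$, where each $L^n_i$ is viewed as a subgraph of $G$ and we put $L^n_i=\varnothing$ when it is undefined.

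3. \textbf{The $L_i$ cover $E(G)$ disjointly.} Fix an edge $e$. The set $\{n: e\in E(G_n)\}$ is cofinite, hence lies in $F$, and it equals the finite union $\bigcup_i\{n:e\in L^n_i\}$. By the ultrafilter property, one of these sets lies in $F$, so $e$ is in some $L_i$. The $e$ lies in at most one $L_i$: for $i\ne j$ the sets $\{n:e\in L^n_i\}$ and $\{n:e\in L^n_j\}$ are disjoint, so they cannot both belong to $F$.

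4. \textbf{Each $L_i$ is a linear forest.} This is the main step, and it works because both defining conditions are finitary.
\begin{itemize}
\item \emph{Degree.} Suppose $v$ had three edges $e_1,e_2,e_3$ in $L_i$. Then the intersection of the three corresponding index sets is in $F$, hence nonempty. For any $n$ in it, $v$ has degree at least $3$ in $L^n_i$, a contradiction.
\item \emph{Acyclicity.} Suppose $L_i$ contained a finite cycle $C$. Intersecting the index sets of the finitely many edges of $C$ gives a set in $F$, so it contains some $n$ with $C\subseteq L^n_i$, a contradiction.
\end{itemize}
The components of $L_i$ are then paths, rays or double rays, which is exactly a linear forest in the sense of the definition.

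5. \textbf{Uncountable case.} If $G$ is uncountable, note that every component of $G$ is countable, because it is connected and locally finite. Decompose each component separately by the countable argument, using the finite case directly for finite components. Then take, for each $i$, the union over all components of the $i$-th forests. A disjoint union of linear forests is a linear forest, so this gives the required decomposition of $G$. Alternatively, one could run the $F$-limit over finite subgraphs indexed by a directed set, but the componentwise argument avoids needing an ultrafilter on a larger index set.
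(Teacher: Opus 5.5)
Your proposal is correct and follows essentially the same route as the paper: reduce to countable components, take $F$-limits of decompositions of the finite induced subgraphs $G[\{v_0,\dots,v_n\}]$ into $k=\lceil(\Delta(G)+1)/2\rceil$ linear forests, verify acyclicity, degree, disjointness and covering by finite intersections and the ultrafilter property, and obtain the converse from an infinite disjoint union containing $G$. Your explicit handling of finite components and the isolated-vertex variant of the converse are harmless refinements of the same argument.
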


\begin{proof}
Assume first that Conjecture \ref{conj} holds, and let $G$ be an infinite graph of finite maximum degree $\Delta(G)$. Since every connected component of $G$ is countable and the components can be treated independently, we may assume that $G$ is connected. Fix an enumeration $V(G)=\{v_0,v_1,\ldots\}$ and let $G_n=G[\{v_0,\ldots,v_n\}]$. Then $G$ is the $F$-limit of $\langle G_n\rangle_n$. Indeed, every vertex and every edge of $G$ is absent from only finitely many of the graphs $G_n$, and therefore belongs to the $F$-limit.

Let
$
k=\left\lceil\frac{\Delta(G)+1}{2}\right\rceil.$
Since $\Delta(G_n)\leq \Delta(G)$ for every $n\in\mathbb{N}$, Conjecture \ref{conj} gives
$
\la(G_n)\leq \left\lceil\frac{\Delta(G_n)+1}{2}\right\rceil
\leq \left\lceil\frac{\Delta(G)+1}{2}\right\rceil=k.
$
Hence each $G_n$ admits an edge decomposition into at most $k$ linear forests. After adding empty color classes when necessary, write these forests as $F_1^n,\ldots,F_k^n$, and for $1\leq i\leq k$ let $F_i$ be the $F$-limit of $\langle F_i^n\rangle_n$.

Each $F_i$ is a linear forest. Indeed, if $F_i$ contained a finite cycle, then, by the finite-intersection property of $F$, the set of indices $n$ for which all edges of this cycle belong to $F_i^n$ would belong to $F$, contradicting the acyclicity of $F_i^n$. The same argument rules out vertices of degree at least $3$ in $F_i$ and shows that $F_i$ and $F_j$ are edge-disjoint whenever $i\neq j$.

It remains to check that these forests cover $E(G)$. Fix $e\in E(G)$, and let $A=\{n\in\mathbb{N}:e\in E(G_n)\}$. The set $A$ is cofinite and hence belongs to $F$. For $1\leq i\leq k$, let $A_i=\{n\in A:e\in E(F_i^n)\}$. Since the $F_i^n$ form an edge decomposition of $G_n$, the sets $A_1,\ldots,A_k$ partition $A$. As $F$ is an ultrafilter and $A\in F$, one of the sets $A_i$ belongs to $F$. Hence $e\in E(F_i)$. Therefore the forests $F_1,\ldots,F_k$ cover $E(G)$, and thus $\la(G)\leq k$, proving Conjecture \ref{arbinf}.

Conversely, assume Conjecture \ref{arbinf} and let $G$ be finite. Let $H$ be the graph whose connected components are countably many disjoint copies of $G$. Then $H$ is infinite and $\Delta(H)=\Delta(G)$. By Conjecture \ref{arbinf}, $\la(H)\leq\lceil(\Delta(G)+1)/2\rceil$. Restricting such a decomposition of $H$ to any one component gives the same upper bound for $\la(G)$. Hence Conjecture \ref{conj} holds.
\end{proof}

We begin by pointing out a first difference between linear arboricity for finite and infinite graphs. Classical results of Akiyama, Exoo, and Harary~\cite{akiyama1981} and of Enomoto and Péroche~\cite{enomoto1984} show that every finite $4$-regular graph has linear arboricity $3$, while every finite $6$-regular graph has linear arboricity $4$. The situation changes for infinite graphs. The infinite $4$-regular and $6$-regular trees have linear arboricity $2$ and $3$, respectively, attaining the lower bound $\lceil\Delta(G)/2\rceil$ rather than the finite regular bound $\lceil(\Delta(G)+1)/2\rceil$. Thus, although the finite and infinite versions of the Linear Arboricity Conjecture are equivalent, the behaviour of linear arboricity itself may be genuinely different in the infinite setting.

In the finite setting, linear arboricity is closely related to Hamiltonian decompositions of complete graphs. Recall that a Hamilton cycle contains every vertex of the graph, and that a Hamiltonian decomposition is an edge decomposition into Hamilton cycles. Hilton gave a characterization of those decompositions of a complete graph into linear forests that can be extended to a Hamiltonian decomposition of a larger complete graph~\cite{hilton1984}; further extension problems in this direction were studied in~\cite{liu2023}. Motivated by this connection, Javadi and Miralaei proposed the following strengthening.

\begin{conjecture}[Javadi--Miralaei~\cite{javadi2026}]
	\label{conj2}
Let $H$ be a subgraph of $K_{2n+1}$ with $n$ edges, and suppose that its edges are colored so that every color class induces a linear forest. Then the coloring extends to an edge-coloring of $K_{2n+1}$ with $n$ colors in which every color class induces a Hamilton cycle.
\end{conjecture}

Conjecture \ref{conj2} asks whether prescribed linear forests can be placed inside distinct members of a Hamiltonian decomposition. For the countable infinite complete graph $K_\omega$, the corresponding objects of the decomposition are spanning double rays. In the topology of $|K_\omega|$, these double rays play the role of Hamiltonian topological circles, that is, topological circles containing every vertex and end. The following countable analogue of Conjecture \ref{conj2} holds.

\begin{theorem}
	\label{hamil}
Let $H$ be a finite subgraph of $K_\omega$ whose edges are colored with finitely many colors in such a way that every color class induces a linear forest. Then this coloring extends to an edge-coloring of $K_\omega$ with countably many colors such that every color class induces a Hamiltonian double ray.
\end{theorem}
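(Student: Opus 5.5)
We will build the coloring by a back-and-forth construction in countably many finite steps. Enumerate the vertices of $K_\omega$ as $v_0,v_1,\ldots$ and the edges as $e_0,e_1,\ldots$. Let the colors used on $H$ be $c_1,\ldots,c_m$, and add countably many further colors $c_{m+1},c_{m+2},\ldots$. The aim is a sequence of finite partial colorings $\varphi_0\subseteq\varphi_1\subseteq\cdots$, where $\varphi_0$ is the given coloring of $H$. Each $\varphi_s$ will satisfy the following invariant: only finitely many colors are used, and every color class is a finite linear forest, hence a disjoint union of finitely many finite paths. The final coloring is $\varphi=\bigcup_s\varphi_s$.

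We interleave three kinds of tasks in a dovetailed schedule. The schedule lists every pair (color $c_j$, vertex $v_i$) and every edge $e_t$ infinitely often or at least once.

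\textbf{Edge task.} Suppose edge $e_t$ is still uncolored. Give it a fresh color $c_j$ that has not been used so far; a single edge is trivially a linear forest.

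\textbf{Connectivity task.} Suppose color class $c_j$ currently has components $P_1,\ldots,P_r$ with $r\ge 2$. Join an end of $P_1$ to an end of $P_2$ by a path of length $2$ through a new vertex $w$, meaning a vertex not incident to any colored edge so far. Such $w$ exists because only finitely many edges are colored. Both new edges $xw,wy$ are uncolored, so color them $c_j$. This preserves linearity and reduces the number of components.

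\textbf{Spanning/extension task.} Take color $c_j$ and vertex $v_i$. If $c_j$ is unused, first make it nonempty. If $v_i$ is not on the $c_j$ class, attach $v_i$ to one end $x$ of a path of $c_j$. If the edge $xv_i$ is already colored, route through a new vertex $w$ instead, using $x w v_i$. Here we need $v_i$ to have $c_j$-degree $0$, which holds since $v_i$ is not on the class.

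Separately, for each $c_j$ we infinitely often extend both ends of its path by fresh vertices. Once the class is a single path, these extensions also use only new vertices, so both ends grow unboundedly.

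In the limit, each class $c_j$ is constructed as follows. After finitely many connectivity steps it becomes a single finite path; every later step only prolongs this path at its ends or attaches vertices at the ends. There is one subtlety: later connectivity steps are never needed, because all subsequent additions keep the class connected. The class is therefore an increasing union of finite paths, each extending the previous one at its ends, with both ends extended infinitely often. Hence it is a double ray. It contains every $v_i$ by the spanning tasks, and every edge receives exactly one color by the edge tasks. Distinct colors are never placed on the same edge, because we only ever color uncolored edges.

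The main obstacle is making the spanning task compatible with keeping the class a single path. Attaching $v_i$ must always happen at a current end, and $v_i$ may already carry many edges of other colors. This is harmless, since the degree constraint is per color. Freshness of $w$ handles the case where the direct edge is already used. One also has to make sure that the end being extended at the spanning step alternates sides, so that both tails are infinite; the separate end-extension tasks guarantee this. Finally, there is a technical check in the connectivity task: the endpoints of the paths of $H$'s color classes must have degree at most $1$ in that color, which is true at path ends.
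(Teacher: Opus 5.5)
Your proposal is correct, but it takes a different route from the paper.

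\textbf{The paper's route.} It first proves an auxiliary claim: in a countable graph where every vertex has only finitely many non-neighbours, every finite linear forest extends to a spanning double ray. The proof joins components through common neighbours, then extends the two ends alternately to absorb the least unused vertex. The paper then builds the double rays one at a time and to completion. First it takes $R_i\supseteq F_i$ inside $K_\omega$ with the earlier $R_j$ and the later forests $F_j$ deleted. After that, each new $R_{k+n+1}$ is chosen inside $K_\omega$ minus all previous double rays, so as to contain the least uncovered edge. The cost of this route is checking that the ``finitely many non-neighbours'' hypothesis survives the deletions. It does, because each spanning double ray removes exactly two edges at every vertex.

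\textbf{Your route.} You run a single dovetailed construction in which every colour class stays finite at every stage. Hence a vertex untouched by all coloured edges always exists, and there is no neighbourhood invariant to maintain. The work moves into bookkeeping. Edge tasks always use fresh colours, and spanning and extension tasks only attach at ends, so the number of components of a class never increases after the class first appears. Finitely many connectivity steps therefore make each class a single path, and from then on both ends are extended infinitely often.

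\textbf{Comparison.} The paper's argument is modular and isolates a reusable lemma. Yours is a uniform finite-extension argument that treats the prescribed and the new colour classes identically. It also never needs to reserve the edges of the later forests $F_j$, as the paper does with the set $D_i$, since those edges are already coloured at stage $0$.

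To make your schedule airtight, state explicitly that connectivity and end-extension tasks for each colour $c_j$ appear infinitely often in the enumeration. These should be listed alongside the pairs $(c_j,v_i)$ and the edges $e_t$, rather than only described ``separately''.
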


\begin{proof}
We first prove an auxiliary claim.

\begin{claim}
	\label{claim1}
	Let $G$ be a countable infinite graph such that every vertex has only finitely many non-neighbours. Then every finite linear forest $L\subseteq G$ is contained in a spanning double ray of $G$.
\end{claim}

\begin{proof}
	We first extend $L$ to a finite path. Let $P_1,P_2,\ldots,P_s$ be the connected components of $L$. Since $L$ is a finite linear forest, each $P_i$ is a finite path, possibly consisting of a single vertex.
	
	We join these paths one at a time. Suppose that $P_1,\ldots,P_i$ have already been joined into a finite path $Q_i$, and let $x$ be one of the endpoints of $Q_i$. Choose an endpoint $y$ of $P_{i+1}$, and orient the two paths so that $x$ is the last vertex of $Q_i$ and $y$ is the first vertex of $P_{i+1}$.
	
	If $xy\in E(G)$, we concatenate $Q_i$ and $P_{i+1}$ using the edge $xy$. Otherwise, since both $x$ and $y$ have only finitely many non-neighbours, the set $N_G(x)\cap N_G(y)$ is infinite. We may therefore choose	$
	z\in N_G(x)\cap N_G(y)\setminus\bigl(V(L)\cup V(Q_i)\bigr).
	$
	
	We then concatenate $Q_i$ and $P_{i+1}$ using the path $xzy$. Continuing in this way, we obtain a finite path $P\subseteq G$ such that $L\subseteq P$. If $L$ is empty, we simply take $P$ to consist of one vertex.
	
	Fix an enumeration $V(G)={v_0,v_1,v_2,\ldots}$. We recursively construct an increasing sequence $P=Q_0\subseteq Q_1\subseteq Q_2\subseteq\cdots$ of finite paths. For each $n\in\mathbb{N}$, denote the left and right endpoints of $Q_n$ by $\ell_n$ and $r_n$, respectively. If $Q_n$ consists of a single vertex, we regard this vertex as both endpoints.
	
	Let $v_{m_n}$ be the first vertex in the fixed enumeration which does not belong to $Q_n$. When $n$ is even, we extend $Q_n$ from $r_n$, and when $n$ is odd, we extend it from $\ell_n$. Let $a_n$ denote the endpoint from which $Q_n$ is extended.
	
	If $a_nv_{m_n}\in E(G)$, we add the edge $a_nv_{m_n}$. If $a_nv_{m_n}\notin E(G)$, then $N_G(a_n)\cap N_G(v_{m_n})$ is infinite. Hence we can choose	
	$
	w_n\in N_G(a_n)\cap N_G(v_{m_n})\setminus\bigl(V(Q_n)\cup\{v_{m_n}\}\bigr)
	$
	and extend $Q_n$ by the path $a_nw_nv_{m_n}$. In either case, the resulting graph $Q_{n+1}$ is a finite path properly containing $Q_n$.
	
	Set $R=\bigcup_{n\in\mathbb{N}}Q_n$. We first show that $R$ is spanning. At each step, we add the first vertex of the fixed enumeration which has not yet been included. If some $v_j$ did not belong to $R$, then $m_n\leq j$ for every $n\in\mathbb{N}$. However, the sequence $\langle m_n\rangle_n$ is strictly increasing, because $v_{m_n}$ is added to $Q_{n+1}$ and vertices are never removed. This would give a strictly increasing sequence of natural numbers bounded above by $j$, which is impossible. Therefore $V(R)=V(G)$.
	
	Moreover, we extend the left and right endpoints alternately. Consequently, both sides of $P$ are extended infinitely often, and no endpoint remains in the union. Every vertex of $R$ has degree two in $R$, while $R$ is connected and acyclic. Hence $R$ is a double ray. Since $V(R)=V(G)$, it is a spanning double ray of $G$, and $L\subseteq P\subseteq R$.
\end{proof}

After relabeling the colors if necessary, suppose that the colors appearing in $H$ are $1,2,\ldots,k$. For each $1\leq i\leq k$, let $F_i$ be the linear forest formed by the edges of $H$ having color $i$. Thus $E(H)=\bigcup_{i=1}^kE(F_i)$, and $F_1,F_2,\ldots,F_k$ are pairwise edge-disjoint.

We first construct pairwise edge-disjoint spanning double rays $R_1,R_2,\ldots,R_k$ such that $F_i\subseteq R_i$ for every $1\leq i\leq k$.

Suppose that $R_1,\ldots,R_{i-1}$ have already been constructed, where $1\leq i\leq k$, and define
\[
D_i=\bigcup_{j<i}E(R_j)\cup\bigcup_{i<j\leq k}E(F_j).
\]
Consider the spanning subgraph $G_i=K_\omega-D_i$.

We first observe that $F_i\subseteq G_i$. Indeed, the forests $F_1,\ldots,F_k$ are pairwise edge-disjoint. Moreover, whenever $j<i$, the double ray $R_j$ was constructed after deleting all the forests $F_\ell$ with $\ell>j$, including $F_i$. Consequently, $E(R_j)\cap E(F_i)=\varnothing$.

Every vertex has only finitely many non-neighbours in $G_i$. Indeed, each previously constructed spanning double ray $R_j$ contains exactly two edges incident with any given vertex. Thus the double rays $R_1,\ldots,R_{i-1}$ remove exactly $2(i-1)$ edges incident with each vertex. Moreover, $\bigcup_{i<j\leq k}F_j$ is finite and therefore removes only finitely many additional edges at each vertex.

By Claim \ref{claim1}, there exists a spanning double ray $R_i$ of $G_i$ such that $F_i\subseteq R_i$. Since $R_i\subseteq G_i$, it is edge-disjoint from $R_1,\ldots,R_{i-1}$ and from all the forests $F_{i+1},\ldots,F_k$. This completes the finite induction.

It remains to cover the edges which do not belong to $R_1,\ldots,R_k$. Fix an enumeration $E(K_\omega)=\{e_0,e_1,e_2,\ldots\}$. We recursively construct spanning double rays $R_{k+1},R_{k+2},\ldots$ which are edge-disjoint from all previously constructed double rays.

Suppose that $R_1,\ldots,R_{k+n}$ have already been constructed. Set
\[
A_n=E(K_\omega)\setminus\bigcup_{i=1}^{k+n}E(R_i).
\]
The set $A_n$ is nonempty. Indeed, the union of finitely many spanning double rays has finite degree at every vertex, whereas every vertex of $K_\omega$ has infinite degree.

Let $q_n=\min\{q\in\mathbb{N}:e_q\in A_n\}$, and consider the spanning subgraph
\[
G_n^*=K_\omega-\bigcup_{i=1}^{k+n}E(R_i).
\]
Every vertex has only finitely many non-neighbours in $G_n^*$, because only finitely many spanning double rays have been removed. The graph consisting of the single edge $e_{q_n}$ is a finite linear forest in $G_n^*$. By Claim \ref{claim1}, there exists a spanning double ray $R_{k+n+1}\subseteq G_n^*$ containing $e_{q_n}$.

By construction, the family $\{R_i:i\geq1\}$ is pairwise edge-disjoint. We claim that it covers every edge of $K_\omega$. Suppose, to the contrary, that some edge $e_q$ belongs to none of these double rays. Then $e_q\in A_n$ for every $n\in\mathbb{N}$, and therefore $q_n\leq q$ for every $n\in\mathbb{N}$. On the other hand, the sequence $\langle q_n\rangle_n$ is strictly increasing. Indeed, before stage $n$, all the edges $e_j$ with $j<q_n$ are already covered, by the minimality of $q_n$, and the edge $e_{q_n}$ is covered at stage $n$. Hence $q_{n+1}>q_n$. This gives a strictly increasing sequence of natural numbers bounded above by $q$, a contradiction. Therefore
\[
E(K_\omega)=\bigcup_{i\geq1}E(R_i).
\]

We assign color $i$ to all the edges of $R_i$. This coloring extends the original coloring of $H$, since $F_i\subseteq R_i$ for every $1\leq i\leq k$, and the double rays are pairwise edge-disjoint.

Thus every color class induces a spanning double ray.
\end{proof}

\section{Topological linear arboricity}
\label{sectop}

In this section, we study topological linear arboricity and its relation to linear arboricity. The $F$-limit argument from the previous section is sufficient to preserve the decomposition into linear forest, but it does not by itself control topological circles. Indeed, an $F$-limit of finite linear forests may contain double rays, and if the two tails of such a double ray converge to the same end, its closure in $|G|$ contains a topological circle. Thus, in the topological setting, it is not enough to take an arbitrary sequence of finite approximations.

The infinite square grid in Figure \ref{grid} illustrates why this additional care is necessary. The coloring on the left decomposes the grid into two linear forests, each consisting of double rays. Since the grid has only one end, these double rays give rise to topological circles. Moreover, this cannot be avoided by choosing a different decomposition into two linear forests, as follows from Theorem \ref{oneend}. On the other hand, the coloring on the right decomposes the grid into three topological linear forests: the black components are paths of length $2$, while the blue and red components are rays. Consequently, the square grid has linear arboricity $2$ but topological linear arboricity $3$.

\begin{figure}[H]
	\centering
	\begin{tikzpicture}[
		scale=0.85,
		vertex/.style={circle, fill=black, inner sep=1.7pt},
		blueedge/.style={blue!65, line width=0.8pt},
		rededge/.style={red!80, line width=0.8pt},
		blackedge/.style={black, line width=0.9pt},
		line cap=round
		]
		
		% =========================================================
		% LEFT GRAPH
		% =========================================================
		
		% Horizontal blue lines
		\foreach \y in {0,1,2,3,4,5}{
			\draw[blueedge] (-1,\y) -- (5,\y);
		}
		
		% Vertical red lines
		\foreach \x in {0,1,2,3,4}{
			\draw[rededge] (\x,-1) -- (\x,6);
		}
		
		% Black vertices
		\foreach \x in {0,1,2,3,4}{
			\foreach \y in {0,1,2,3,4,5}{
				\node[vertex] at (\x,\y) {};
			}
		}

		% =========================================================
		% RIGHT GRAPH
		% =========================================================
		
		\begin{scope}[xshift=8cm]
			
			% Horizontal blue lines
			\foreach \y in {0,1,2,3,4,5}{
				\draw[blueedge] (-1,\y) -- (5,\y);
			}
			
			% Vertical red lines
			\foreach \x in {0,1,2,3,4}{
				\draw[rededge] (\x,-1) -- (\x,6);
			}
			
			% ---------------------------------------------------------
			% Black staircase/path
			% ---------------------------------------------------------
			
			% Initial rays
			\draw[blackedge] (0,6) -- (0,5);
			\draw[blackedge] (-1,5) -- (0,5);
			
			% Staircase
			\draw[blackedge] (0,4) -- (1,4);
			\draw[blackedge] (1,5) -- (1,4);
			
			\draw[blackedge] (1,3) -- (2,3);
			\draw[blackedge] (2,4) -- (2,3);
			
			\draw[blackedge] (2,2) -- (3,2);
			\draw[blackedge] (3,3) -- (3,2);
			
			\draw[blackedge] (3,1) -- (4,1);
			\draw[blackedge] (4,2) -- (4,1);
			
			% Black vertices
			\foreach \x in {0,1,2,3,4}{
				\foreach \y in {0,1,2,3,4,5}{
					\node[vertex] at (\x,\y) {};
				}
			}
			
		\end{scope}
		
	\end{tikzpicture}
	
		\caption{Two edge decompositions of the square grid. On the left, the edges are decomposed into two linear forests, both containing double rays. On the right, the edges are decomposed into three topological linear forests.}
	\label{grid}
\end{figure}

We first record a simple criterion for ruling out topological circles.

\begin{lemma}
	\label{lema1}
	Let $L$ be a linear forest in a graph $G$. If no component of $L$ is a double ray, then $|L|$ contains no topological circle.
\end{lemma}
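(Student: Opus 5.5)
We argue by contradiction: suppose $C\subseteq|L|$ is a topological circle, where every component of $L$ is a finite path or a ray. The plan is to show that $C$ must contain an edge, then follow $C$ along the component containing that edge and see that $C$ can only return through ends. That would force two distinct tails of this single component to converge to ends, which fails for a path or a ray.

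\textbf{Step 1: $C$ contains an inner point of an edge.} The set $V(L)\cup\Omega_G(L)$ is totally disconnected in the subspace topology. Vertices are isolated from other vertices by small open stars. The ends of $G$ form a totally disconnected set, since basic neighbourhoods $C(S,[r])$ separate distinct ends. A circle is not totally disconnected, so $C$ meets the interior of some edge $e$ of $L$. An inner point of an edge has arbitrarily small neighbourhoods in $|L|$ homeomorphic to open intervals. Hence $C$ contains all of $e$, including both endpoints, because $C$ is closed and a proper closed subset of a circle containing an open arc cannot stop in the middle of the arc.

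\textbf{Step 2: walk along the component.} Let $P$ be the component of $L$ containing $e$. At every vertex $v$ of $L$, the point $v$ has a neighbourhood in $|L|$ consisting of $v$ together with half-edges of the at most two $L$-edges at $v$. Therefore, if $C$ passes through $v$, it uses exactly two edges at $v$; in particular $v$ has degree $2$ in $L$. Propagating from $e$, $C$ contains every edge of $P$, and no vertex of $P$ may have degree $1$.

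\textbf{Step 3: conclude.} If $P$ is a finite path, its endpoints have degree at most $1$, which contradicts Step 2; the trivial path has no edges and so cannot contain $e$. So $P$ is a ray $v_0v_1\dots$, and $v_0$ has degree $1$ in $L$. This again contradicts Step 2, which requires degree $2$ at every vertex of $C$. Since double rays are excluded by hypothesis, no component can host $C$, and $|L|$ contains no topological circle.

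\textbf{Main obstacle.} The delicate point is the local structure at vertices in Step 2. We must check that the subspace topology really gives each vertex of $L$ a neighbourhood that is a star of at most two half-edges, with no ends accumulating at the vertex. This holds because ends are separated from a vertex $v$ by taking $S=\{v\}$ in the basic neighbourhoods $C(S,[r])$, which excludes $v$ and meets its incident edges only in half-open intervals away from $v$. With this in hand, a circle through $v$ is locally an arc, so it must use exactly two of the edges at $v$. The remaining steps are bookkeeping.
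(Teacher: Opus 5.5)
Your proof is correct and follows the paper's route: take an edge of $C$, use that $C$ has degree two at each of its vertices to propagate through the component $P$ containing it, and conclude that $P$ has no endpoint, so it would have to be a double ray. Your Step 1 usefully justifies that $C$ contains an edge at all, which the paper takes for granted. One small fix: ends do not accumulate at a vertex $v$ because the open star of $v$ is an open set of the 1-complex, hence open in $|G|$, and contains no ends; the neighbourhoods with $S=\{v\}$ only give the reverse separation, namely that each end has a neighbourhood missing $v$.
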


\begin{proof}
	Suppose, for a contradiction, that $|L|$ contains a topological circle $C$. Choose an edge $e=uv$ of $C$, and let $P$ be the component of $L$ containing $e$.

	Since $L$ is a linear forest, every vertex of $P$ has degree at most two in $P$. We claim that, starting from $e$, the circle $C$ forces $P$ to continue indefinitely in both directions. Indeed, consider one endpoint, say $u$, of $e$. Since $C$ is a topological circle, $u$ cannot be incident with only one edge of $C$. Thus, besides $e$, the circle must contain another edge of $L$ incident with $u$. As $P$ is the component of $L$ containing $e$, this second edge also belongs to $P$.
	
	The same argument may now be repeated at the other endpoint of this second edge. More generally, whenever the circle reaches a vertex $x$ of $P$ along one edge of $P$, it must leave $x$ along another edge of $P$; otherwise $x$ would have degree one in the circle. Since $d_P(x)\leq 2$, this continuation is uniquely determined whenever it exists. Moreover, it cannot terminate at a vertex of $G$, for a terminal vertex would again be incident with only one edge of $C$.
	
	Consequently, starting from $e$ and proceeding through $P$ from $u$, we obtain a ray of $P$. Applying the same argument from the other endpoint $v$ of $e$ yields a ray extending in the opposite direction. These two rays together with $e$ form a double ray. Since $P$ is connected, acyclic, and has maximum degree at most two, it follows that $P$ itself is a double ray.
	
	This contradicts the assumption that no component of $L$ is a double ray. Hence $|L|$ contains no topological circle.
\end{proof}

The next lemma provides a sparse transversal for a countable family of double rays.

\begin{lemma}
	\label{lema2}
Let $G$ be a graph, let $M$ be a matching, and let $\mathcal{D}$ be a countable family of pairwise edge-disjoint double rays. Then there exists a matching $N$ such that every member of $\mathcal{D}$ meets $M\cup N$ and every component of $M\cup N$ has at most three edges.
\end{lemma}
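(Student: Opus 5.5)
We build $N$ greedily along an enumeration $\mathcal{D}=\{D_0,D_1,D_2,\ldots\}$, adding at most one edge at each stage. Throughout we maintain two invariants. First, $N_n$ is a matching with $N_n\cap M=\varnothing$. Second, every component of $M\cup N_n$ has at most three edges. The components are automatically paths or even cycles, since $M$ and $N_n$ are matchings. Set $N_0=\varnothing$.

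At stage $n$, if $D_n$ already meets $M\cup N_n$, put $N_{n+1}=N_n$. Otherwise $D_n$ contains no edge of $M\cup N_n$, and we look for an edge $e\in E(D_n)$ that can be added. The edge $e=xy$ must satisfy two conditions:
\begin{enumerate}
\item no edge of $N_n$ is incident with $x$ or $y$, so that $N_n\cup\{e\}$ is a matching;
\item the new component, made of $e$ together with the components of $M\cup N_n$ at $x$ and at $y$, has at most three edges.
\end{enumerate}

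The key observation is that only finitely many vertices are problematic. The edges in $N_n$ number at most $n$. The vertices covered by $N_n$ therefore form a finite set $B$. We choose a vertex $x$ of $D_n$ outside $V(N_n)$. Its neighbour $y$ along $D_n$ should also lie outside $B$. Such pairs of consecutive vertices exist because $D_n$ is infinite and $B$ is finite.

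Condition (2) comes from the fact that a vertex not covered by $N_n$ lies in a component of $M\cup N_n$ that is either trivial or a single $M$-edge. The reason is that $M$-edges can only be joined to other edges at vertices covered by $N_n$. More precisely, a component containing a vertex $x\notin V(N_n)$ is a path with $x$ as an endpoint. Any such component that contains an $N_n$-edge must reach $x$ through an $M$-edge whose other end lies in $B$. To keep the new component small, we choose $x,y$ so that neither $x,y$ nor their $M$-partners lie in $B$. This excludes only finitely many vertices, namely $B$ and the at most $|B|$ vertices $M$-matched into $B$. Hence such an edge $e=xy$ of $D_n$ exists.

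With this choice, the components at $x$ and $y$ each have at most one edge, necessarily an $M$-edge. Adding $e$ joins them into a path with at most three edges. The case where $x$ and $y$ lie in the same component cannot occur, since $xy\notin M$ because $D_n$ misses $M$. Also $e\notin M$, so the invariant $N_n\cap M=\varnothing$ is preserved. Put $N_{n+1}=N_n\cup\{e\}$.

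Finally let $N=\bigcup_n N_n$. It is a matching, since any two of its edges already lie in some $N_n$. Every component of $M\cup N$ is finite. It also has at most three edges: any four of its edges in a connected piece would appear in a single $M\cup N_n$, where components are small, and later stages never merge components of $M\cup N_n$ that already contain an $N$-edge, as shown above. Every $D_n$ meets $M\cup N_{n+1}\subseteq M\cup N$.

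The main obstacle is this last point. One must verify that edges added later never enlarge an existing component containing an $N$-edge. This is exactly why $x$, $y$ and their $M$-partners are taken outside $B$: the new edge then touches only components that are isolated $M$-edges or single vertices.
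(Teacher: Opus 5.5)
Your proof is correct and uses the same greedy construction as the paper. You enumerate $\mathcal{D}$, and whenever a double ray misses the current edges you add one of its edges avoiding the finite set of vertices covered by earlier $N$-edges and their $M$-mates, which is exactly the paper's set $X_n$. The difference is only presentational: you keep the bound of three edges per component as a stage-by-stage invariant and pass to the union by a finite-character argument, whereas the paper notes directly that no $M$-edge can join endpoints of two distinct $N$-edges.
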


\begin{proof}
Enumerate the members of $\mathcal{D}$ as $R_0,R_1,\ldots$, stopping after finitely many terms if $\mathcal{D}$ is finite. We choose the edges of $N$ recursively. If $R_n$ already meets $M$, no choice is needed. Otherwise, let $X_n$ be the finite set consisting of the endpoints of the edges chosen at earlier stages and their possible mates in $M$. Since $R_n$ is infinite, it contains an edge $e_n$ whose endpoints do not belong to $X_n$. Add $e_n$ to $N$. The construction makes $N$ a matching, and no edge of $M$ can join endpoints of two distinct edges of $N$. Hence each component of $M\cup N$ is a path of length at most three. By construction, every $R_n$ meets $M\cup N$.
\end{proof}

We can now compare the two arboricity parameters.

\begin{theorem}
	\label{count}
Let $G$ be a graph. If $G$ has finite maximum degree $\Delta(G)$, then $\la(G)\leq\latop(G)\leq\la(G)+1$. If $\la(G)$ is infinite, then $\latop(G)=\la(G)$.
\end{theorem}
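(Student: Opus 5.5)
The lower bound $\la(G)\leq\latop(G)$ is immediate, since every topological linear forest is in particular a linear forest. For the upper bound, I start from an optimal decomposition $L_1,\ldots,L_k$ of $E(G)$ into linear forests, with $k=\la(G)$, whether finite or infinite. The plan is to delete a sparse set of edges from each $L_i$ so that no double ray survives. The deleted edges are collected into one extra class which has no double rays, and Lemma \ref{lema1} then shows that every class is a topological linear forest.

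Concretely, when $k$ is finite, let $\mathcal{D}_i$ be the set of double-ray components of $L_i$. Any graph of finite maximum degree has components that are countable, so the natural move is to work componentwise. Within one component of $G$, each $\mathcal{D}_i$ is countable, and so is $\mathcal{D}=\bigcup_i\mathcal{D}_i$. The members of $\mathcal{D}$ are pairwise edge-disjoint, because the $L_i$ are edge-disjoint and the components of a single $L_i$ are disjoint. Apply Lemma \ref{lema2} with $M=\varnothing$ to obtain a matching $N$ meeting every member of $\mathcal{D}$. Set $L_i'=L_i-E(N)$ and $L_{k+1}=N$; taking the union over the components of $G$, these classes form a decomposition of $E(G)$.

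Each $L_i'$ is a linear forest. Removing at least one edge from each double ray of $L_i$ splits it into rays and finite paths. Removing edges from rays or paths never creates a double ray. Hence no component of $L_i'$ is a double ray, and Lemma \ref{lema1} makes $L_i'$ topological. The class $N$ is a matching, so it is also topological, which gives $\latop(G)\leq k+1$.

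When $k$ is infinite, the extra class must be absorbed so that the count stays $k$. I would fix a bijection between the index set $k$ and $k\times\{0,1\}$, splitting the classes into $L_\alpha$ and $L'_\alpha$. It is simpler to use Lemma \ref{lema2} on each $L_\alpha$ separately with $M=\varnothing$, giving a matching $N_\alpha$ that meets every double ray of $L_\alpha$ inside each component. The family $\{L_\alpha-N_\alpha,\ N_\alpha:\alpha<k\}$ then has cardinality $k\cdot 2=k$ and consists of topological linear forests. The only care needed is to make sure each $\mathcal{D}$ used is countable. This holds componentwise only when degrees are countable, since a component of $G$ may be uncountable when $\Delta(G)$ is infinite. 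To handle this, I would generalize the greedy argument of Lemma \ref{lema2} by transfinite recursion: enumerate the double rays of a single $L_\alpha$ and pick one edge from each. Because the double rays of a single linear forest are vertex-disjoint, any choice of one edge per double ray is automatically a matching, so no recursion is actually needed.

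This last observation suggests the main obstacle is milder than expected. Even in the finite case, I could choose one edge from each double ray of each $L_i$ and put all these edges into a single class $N$; that $N$ may fail to be a matching across different $i$. This is exactly where Lemma \ref{lema2} is used: its recursion keeps the chosen edges pairwise non-adjacent, so $N$ stays a linear forest and the extra class number is one rather than $k$.
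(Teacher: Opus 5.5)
Your proof is correct. For finite $k=\la(G)$ it is the paper's proof: apply Lemma~\ref{lema2} with $M=\varnothing$ in each component to all double-ray components of all classes, and take one extra matching $N$. It is worth stating that finite $k$ already forces $\Delta(G)\le 2k$, and this is what makes the components countable.

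You genuinely depart from the paper in the infinite case. The paper reuses the single-matching construction and notes $|I|+1=|I|$. That construction needs countable components, which the paper justifies by ``finite maximum degree'', and this hypothesis cannot hold when $\la(G)$ is infinite. With vertices of uncountable degree, the argument fails as written. Take $A=\{a_n:n\in\mathbb{Z}\}$ and, for $\beta<\omega_1$, double rays $D_\beta=\cdots a_0b^\beta_0a_1b^\beta_1\cdots$ with private vertices $b^\beta_n$. Then $\{D_\beta\}$ is an optimal decomposition of their union. Every edge meets the countable set $A$, so any matching meets only countably many $D_\beta$, and no single $N$ exists.

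Your per-class matchings avoid this. Taking one edge from each double-ray component of $L_\alpha$ is automatically a matching, since components are vertex-disjoint. This needs neither Lemma~\ref{lema2} nor any countability, and gives $2\cdot k=k$ classes. So the paper's route is uniform across both cases, but as written it is only valid when components are countable (e.g.\ locally finite $G$). Yours proves the second assertion in full generality.

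In a write-up, drop the detour through Lemma~\ref{lema2} and the bijection with $k\times\{0,1\}$, and state the direct argument.
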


\begin{proof}
The inequality $\la(G)\leq\latop(G)$ follows directly from Definition \ref{toparb}. Fix a decomposition $\mathcal{F}=\{F_i:i\in I\}$ of $G$ into linear forests with $|I|=\la(G)$. In each connected component $C$ of $G$, consider all double-ray components of the graphs $F_i\cap C$, with $i\in I$. These double rays are pairwise edge-disjoint. Since $G$ has finite maximum degree, every connected component is countable. Lemma \ref{lema2}, applied with $M=\varnothing$, therefore gives a matching $N_C$ meeting every such double ray. Let $N$ be the union of the matchings $N_C$ over all components $C$ of $G$.

For each $i\in I$, every component of $F_i-N$ is a finite path or a ray. Lemma \ref{lema1} implies that $F_i-N$ is a topological linear forest. Hence the family $\{F_i-N:i\in I\}\cup\{N\}$ is an edge decomposition of $G$ into at most $|I|+1$ topological linear forests. This proves $\latop(G)\leq\la(G)+1$ when $\la(G)$ is finite. If $\la(G)$ is infinite, then $|I|+1=|I|$, and therefore $\latop(G)=\la(G)$.
\end{proof}

\begin{corollary}
	\label{cor}
Let $G$ be a graph of finite maximum degree $\Delta (G)$. If Conjecture~1.2 holds, then
\[
\latop(G)\leq\left\lceil\frac{\Delta(G)+1}{2}\right\rceil+1.
\]
\end{corollary}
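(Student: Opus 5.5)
The corollary is a direct combination of Theorem \ref{count} with the infinite form of the conjecture. There is one subtlety: the label ``Conjecture~1.2'' refers to Conjecture \ref{arbinf}, which only concerns infinite graphs. By Theorem \ref{equiv}, assuming it is the same as assuming Conjecture \ref{conj}. So we may use the bound $\la(G)\leq\lceil(\Delta(G)+1)/2\rceil$ for every graph $G$ of finite maximum degree, finite or infinite.

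We split into two cases according to whether $G$ is finite.

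If $G$ is infinite, Conjecture \ref{arbinf} gives $\la(G)\leq\lceil(\Delta(G)+1)/2\rceil$ directly. Since $\Delta(G)$ is finite, Theorem \ref{count} gives $\latop(G)\leq\la(G)+1$, and the stated bound follows.

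If $G$ is finite, there are two routes.
\begin{itemize}
\item Use Conjecture \ref{conj}, available through Theorem \ref{equiv}, to get the same bound on $\la(G)$, and then apply Theorem \ref{count} as before.
\item Argue directly. A finite graph has no rays, so by Lemma \ref{lema1} every linear forest of $G$ is a topological linear forest. Hence $\latop(G)=\la(G)$, which is even stronger than needed.
\end{itemize}

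There is no real obstacle. The only point needing care is that Conjecture \ref{arbinf} literally speaks only about infinite graphs, so the finite case must be covered either through Theorem \ref{equiv} or through the direct observation above.
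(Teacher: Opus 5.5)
Your first route is correct and is essentially the paper's proof: under the hypothesis, Theorem~\ref{equiv} gives $\la(G)\leq\lceil(\Delta(G)+1)/2\rceil$ for every graph of finite maximum degree, finite or infinite, and Theorem~\ref{count} then adds at most one. Your second, ``direct'' route for finite $G$ is not an alternative to Theorem~\ref{equiv}, however. The equality $\latop(G)=\la(G)$, which does follow from Lemma~\ref{lema1}, bounds nothing on its own, and the bound on $\la(G)$ you still need for finite $G$ comes from Conjecture~\ref{conj}. Under your hypothesis, that conjecture is available only through Theorem~\ref{equiv} (the disjoint-copies argument). So the observation replaces Theorem~\ref{count}, not Theorem~\ref{equiv}, and your closing ``either \ldots\ or'' should be dropped.
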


\begin{proof}
By Theorem \ref{equiv}, Conjecture \ref{conj} implies $\la(G)\leq\lceil(\Delta(G)+1)/2\rceil$. The result now follows from Theorem \ref{count}.
\end{proof}

Thus, assuming Conjecture \ref{conj}, Corollary \ref{cor} yields a general upper bound for topological linear arboricity that exceeds the conjectured finite bound by at most one. In the cubic case, Thomassen proved a stronger bounded-component version of the corresponding finite result.

\begin{theorem}[Thomassen~\cite{thomassen1999}]
	\label{tom}
Every finite cubic graph admits a $2$-edge-coloring in which every monochromatic component is a path of length at most five.
\end{theorem}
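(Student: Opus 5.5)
Theorem~\ref{tom} is quoted from~\cite{thomassen1999} and is only used later as a black box. If I had to prove it myself, I would argue by a minimal counterexample combined with an extremal choice of colouring.

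\emph{Step 1 (reduction to a normal form).} Call a red/blue colouring of a finite cubic graph $G$ \emph{balanced} if every vertex is incident with edges of both colours. Then every vertex has degree $1$ or $2$ in each colour, so every monochromatic component is a path or a cycle. A balanced colouring exists: take a spanning subgraph of maximum degree $2$ maximising the number of covered vertices, colour it red and the rest blue, and fix the finitely many defective vertices by local swaps. This mirrors the decomposition of a cubic graph into a linear forest and a matching used by Akiyama--Exoo--Harary. The task then becomes showing that some balanced colouring has no monochromatic cycle and no monochromatic path with more than five edges.

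\emph{Step 2 (minimal counterexample).} I would take $G$ to be a counterexample with $|V(G)|$ minimal and derive structure.

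\begin{itemize}
\item First, $G$ is connected.
\item Next, $G$ has no bridge. A bridge $uv$ could be removed, the two sides completed to cubic graphs by small gadgets, the colourings obtained by minimality, and the colours permuted on one side so that $uv$ joins two short paths of different colours.
\item Then, $G$ has no triangle and no $4$-cycle. Such a cycle would be contracted, or replaced by a single edge or vertex, and the colouring of the smaller graph lifted back. This needs a finite case check that lifting adds at most a bounded number of edges to any monochromatic path. That bound is exactly where the constant $5$ has to be chosen carefully: the reduction must produce paths of length at most $5$ provided the smaller graph has them.
\end{itemize}

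Once $G$ is bridgeless, Petersen's theorem also gives a perfect matching $M$. Then $G-M$ is a $2$-factor, and I would start from the colouring with $M$ blue and the $2$-factor red, subject to the refinements in Step~3.

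\emph{Step 3 (cutting the long red components).} Starting from the $2$-factor, I would recolour a carefully chosen set of red edges to blue along each cycle. The chosen edges should be spaced roughly every three or four edges, so that red cycles break into red paths of length at most $5$. At the same time, each new blue edge, together with the $M$-edges at its ends, should form a blue path of length at most $5$. This is Lemma~\ref{lema2}-type bookkeeping, now with an additional length constraint. Among all admissible choices I would take one minimising a potential such as
\[
\sum_{P}\max\{0,|E(P)|-5\}+\#\{\text{monochromatic cycles}\},
\]
and show that any positive potential can be decreased by a local switch. A typical switch exchanges the colours of two edges at a vertex lying in the middle of a long path.

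I expect Step~3, specifically the switching analysis, to be the main obstacle. A local switch that shortens one long path can lengthen an adjacent path of the other colour, and excluding this by a finite case analysis is precisely what forces the bound five rather than something smaller. My first attempt there would be to recolour so that every blue component is a path of length $1$ or $3$ (a matching edge alone or with both neighbours). I would then check that the no-triangle/no-$4$-cycle structure from Step~2 always leaves room to cut each red cycle with correct spacing.
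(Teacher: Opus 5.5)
The paper gives no proof of this statement; it is imported from Thomassen and used as a black box. Your sketch therefore has to stand on its own, and it does not: every step that carries the difficulty is announced rather than carried out.

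The first concrete failure is in Step~2. You run a minimal-counterexample argument whose induction hypothesis is just the theorem itself, and that hypothesis does not survive your reductions. Take the bridge case. Minimality only tells you that in the completed side containing $u$ every monochromatic component has length at most $5$. Suppose $u$ has one red and one blue edge in $G_u$ and the gadget edge at $u$ is red. Then you only learn that the red and blue paths of $G_u$ ending at $u$ have lengths $a_r\le 4$ and $a_b\le 5$, and similarly $c_r\le 4$, $c_b\le 5$ at $v$. Choosing the colour of $uv$ and swapping colours on one side, the path through $uv$ has length $\min(a_r,a_b)+1+\min(c_r,c_b)$ at best, and these bounds only control that by $9$. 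Nothing in your argument makes the paths at $u$ and $v$ ``short'', since minimality gives you no say in which colourings of the sides you get.

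The triangle and $4$-cycle reductions have the same defect. A lift that lengthens a monochromatic path by even one edge is fatal, because the smaller graph may already contain a path of length exactly $5$ through the reduced region, and the bound $5$ has no slack. In $K_{3,3}$ the larger colour class has at least $5$ edges and is an acyclic graph of maximum degree $2$ on $6$ vertices, hence a Hamiltonian path of length $5$. To make such an induction close, you must strengthen the statement so that it controls the monochromatic paths at the vertices where pieces are glued back, for instance at the vertices of degree less than $3$ of a subcubic graph. You have neither formulated nor proved such a strengthening.

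Step~3 is where the theorem actually lives, and there you only state the goal: that some local switch always lowers your potential. No switch is specified and no case analysis is given. As you note yourself, shortening one path can lengthen a path of the other colour or close a monochromatic cycle, so there is no reason a local minimum of the potential must be $0$.

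Your proposed normal form is a perfect matching $M$ together with a matching $S$ in the $2$-factor $G-M$ that meets every cycle, leaves red paths of length at most $5$, and covers at most one end of each edge of $M$. This is itself a strengthening of the theorem for bridgeless graphs. You give no argument that suitable $M$ and $S$ exist, nor any explicit way in which the absence of bridges, triangles and $4$-cycles would guarantee them. As written, the proposal is a research plan rather than a proof; the missing ingredient is exactly the content of Thomassen's paper.
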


The bounded length in Thomassen's theorem is preserved by $F$-limits and yields our first exact computation of topological linear arboricity.

\begin{theorem}
Every cubic graph has topological linear arboricity $2$.
\end{theorem}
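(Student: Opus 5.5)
The plan is to prove the lower bound directly and to get the upper bound by passing Thomassen's theorem (Theorem \ref{tom}) to infinite cubic graphs through an $F$-limit, exactly as in the proof of Theorem \ref{equiv}. Since the bound on component size survives the limit, no component of either colour class can be a ray or double ray. Lemma \ref{lema1} then rules out topological circles.

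For the lower bound, every vertex has degree $3$. A single linear forest covers at most two edges at a vertex, so $\la(G)\geq 2$. Since $\la(G)\leq\latop(G)$ by Theorem \ref{count}, we get $\latop(G)\geq 2$. (If $G$ is finite, topological linear forests are just linear forests, and Theorem \ref{tom} gives the upper bound directly.)

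For the upper bound, assume $G$ is an infinite cubic graph. Components can be handled separately, and each component is countable because the degree is finite; so I take $G$ connected with $V(G)=\{v_0,v_1,\ldots\}$. The induced subgraphs $G_n=G[\{v_0,\ldots,v_n\}]$ have maximum degree at most $3$ but are not cubic, so Theorem \ref{tom} cannot be applied to them directly. This is the one step that needs care. I would embed each $G_n$ in a finite cubic graph $H_n$. One standard way is to take three disjoint copies of $G_n$ and, for every vertex $v$ of degree $d<3$, join its three copies by a triangle-free gadget. Simplest is to take copies $G_n^1,\ldots,G_n^{m}$ and add perfect-matching-type edges between the copies of $v$ until each copy has degree $3$. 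For example, take $4$ copies. A vertex of deficiency $1$ gets a perfect matching on its $4$ copies. A vertex of deficiency $2$ gets a $4$-cycle on its copies, which is $2$-regular and simple. A vertex of deficiency $3$ (isolated in $G_n$) gets a $K_4$ on its copies. The result $H_n$ is a simple finite cubic graph containing $G_n$ as a subgraph, namely the first copy.

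Theorem \ref{tom} gives a $2$-edge-colouring of $H_n$ in which every monochromatic component is a path of length at most five. Restricting it to $G_n$ gives forests $F_1^n,F_2^n$ decomposing $E(G_n)$ whose components are paths of length at most five. Let $F_i$ be the $F$-limit of $\langle F_i^n\rangle_n$. As in Theorem \ref{equiv}, $F_1$ and $F_2$ are edge-disjoint linear forests covering $E(G)$.

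It remains to control component size. Suppose some component of $F_i$ contained a path $P$ with six edges. These finitely many edges all lie in $F_i^n$ for a set of indices belonging to $F$, hence for some $n$. But a path of six edges in $F_i^n$ lies inside one component, which has length at most five, a contradiction. So every component of $F_i$ is a path of length at most five; in particular no component is a double ray. By Lemma \ref{lema1}, each $|F_i|$ contains no topological circle, so $\latop(G)\leq 2$.
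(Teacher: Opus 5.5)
Your proposal is correct and follows essentially the same route as the paper. Both arguments embed the finite induced subgraphs in finite cubic graphs, apply Thomassen's theorem, take $F$-limits, use the finite-intersection property to keep every monochromatic component a path of length at most five, and conclude with Lemma~\ref{lema1}. The only difference is how you complete $G_n$ to a cubic graph: you take four copies and place a matching, a $4$-cycle or a $K_4$ on the copies of each deficient vertex, whereas the paper iterates a doubling construction three times. Both constructions give a simple finite cubic graph containing $G_n$ as an induced subgraph.
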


\begin{proof}
It suffices to consider a connected graph $G$, which is countable. Enumerate $V(G)=\{v_0,v_1,\ldots\}$, and let $H_n=G[\{v_0,\ldots,v_n\}]$. Complete $H_n$ to a finite cubic graph $G_n$ by the standard doubling construction: starting with $H_n$, repeatedly take two disjoint copies and join corresponding vertices whose current degree is less than $3$. After three steps the resulting graph is cubic and contains an induced copy of $H_n$.

By Theorem \ref{tom}, choose a decomposition of $G_n$ into two linear forests $F_1^n$ and $F_2^n$, every component of which has length at most $5$. For $i\in\{1,2\}$, let $F_i$ be the $F$-limit of $\langle F_i^n\cap G\rangle_n$. The usual finite-intersection argument shows that $F_1$ and $F_2$ are edge-disjoint linear forests covering $E(G)$.

We claim that every component of each $F_i$ has length at most $5$. Otherwise some $F_i$ contains a path $P$ with six edges. For each edge $e\in E(P)$, the set $\{n\in\mathbb{N}:e\in E(F_i^n)\}$ belongs to $F$. Their finite intersection therefore belongs to $F$ and is nonempty, so $P\subseteq F_i^n$ for some $n$, contradicting the choice of $F_i^n$.

Thus neither $F_1$ nor $F_2$ contains a ray or a double ray. By Lemma \ref{lema1}, both are topological linear forests. Since every cubic graph has maximum degree $3$, at least two linear forests are necessary, and hence $\latop(G)=2$.
\end{proof}

We now turn to even-regular graphs. Our first result shows that, for a natural class of such graphs, attaining the degree lower bound for linear arboricity forces the topological linear arboricity to be strictly larger.

\begin{theorem}
	\label{oneend}
Let $G$ be a connected one-ended $2k$-regular graph. If $\la(G)=k$, then $\latop(G)\geq k+1$.
\end{theorem}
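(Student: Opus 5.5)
We argue by contradiction. Suppose $\latop(G)\leq k$, and let $F_1,\ldots,F_k$ be an edge decomposition of $G$ into topological linear forests. The plan is to show that some $F_i$ contains a double ray, and then use one-endedness to turn that double ray into a topological circle.

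\textbf{Step 1: each $F_i$ is a spanning $2$-regular subgraph.} Since $G$ is $2k$-regular, every vertex $v$ satisfies $\sum_i d_{F_i}(v)=2k$. On the other hand, $d_{F_i}(v)\leq 2$ for each $i$. Hence $d_{F_i}(v)=2$ for every $i$ and every $v$. A linear forest in which every vertex has degree exactly $2$ can have no finite path components and no ray components, because these have endpoints of degree at most $1$. So every component of every $F_i$ is a double ray. Since $E(G)\neq\varnothing$, some $F_i$ has at least one component, and that component is a double ray $D$. The hypothesis $\la(G)=k$ is only used to guarantee that decompositions into exactly $k$ forests are the relevant ones; the real content is this degree count.

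\textbf{Step 2: the closure of $D$ is a topological circle.} The two tails $r$ and $s$ of $D$ are rays of $G$. Because $G$ is one-ended, they are equivalent and so converge to the unique end $\omega$. Moreover $\omega\in\Omega_G(F_i)$, since $F_i$ contains rays of $\omega$. It remains to check that $D\cup\{\omega\}$, with the topology inherited from $|G|$, is homeomorphic to $\mathbb{S}^1$. We use the standard map sending $D$ onto $\mathbb{S}^1$ minus a point, with both tails approaching that point, and sending $\omega$ to the missing point.
\begin{itemize}
\item[(a)] Continuity at $\omega$ in one direction: every basic neighbourhood $C(S,\omega)$, for finite $S$, contains tails of both $r$ and $s$.
\item[(b)] Continuity in the other direction: a sequence of points of $D$ converges to $\omega$ only if it eventually leaves every finite initial segment. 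This follows because $G$ is locally finite, so a finite subpath of $D$ is closed and avoids a neighbourhood of $\omega$.
\item[(c)] Conclusion: since $\mathbb{S}^1$ is compact, I would instead argue directly that $D\cup\{\omega\}$ is compact and Hausdorff, and then conclude that a continuous bijection from $\mathbb{S}^1$ onto it is a homeomorphism.
\end{itemize}

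\textbf{Main obstacle.} The one genuinely topological step is the homeomorphism check in Step 2, and in particular confirming that the subspace topology near $\omega$ coincides with the circle topology. I would handle it using the fact that $G$ is connected, locally finite and countable, so $|G|$ is the Freudenthal compactification and hence compact Hausdorff. The closure of $D$ is then compact. Points of $D$ other than $\omega$ are isolated from $\omega$ by finite separators. These two facts give the inverse continuity for free.

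Thus $|F_i|$ contains a topological circle, which contradicts the assumption that $F_i$ is a topological linear forest. Therefore $\latop(G)\geq k+1$.
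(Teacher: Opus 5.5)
Your proof is correct and follows the same approach as the paper: the degree count forces every colour class of a $k$-forest decomposition to be a spanning union of double rays, and one-endedness closes each double ray into a topological circle in $|F_i|$. The only difference is that you verify $D\cup\{\omega\}\cong\mathbb{S}^1$, which the paper simply asserts. Your item (b) is unnecessary, since continuity of $\mathbb{S}^1\to D\cup\{\omega\}$ (your item (a)) and Hausdorffness of $|G|$ already give a homeomorphism by the compact-to-Hausdorff argument.
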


\begin{proof}
Let $F_1,\ldots,F_k$ be a decomposition of $G$ into $k$ linear forests. For every vertex $v$, we have
\[
2k=d_G(v)=\sum_{i=1}^k d_{F_i}(v)
\]
and every summand is at most $2$. Hence $d_{F_i}(v)=2$ for every $i$ and every $v$. Consequently, each $F_i$ is a spanning $2$-regular forest, so all its components are double rays. Since $G$ has only one end, the two tails of every such double ray lie in the same end and induce a topological circle. Thus no decomposition into $k$ linear forests is topological, and $\latop(G)\geq k+1$.
\end{proof}

%Together with Theorem~3.4, this gives $\latop(G)=k+1$ whenever $G$ is connected, one-ended, $2k$-regular and $\la(G)=k$. Thus this class necessarily requires one additional color in the topological setting.

Petersen's $2$-factor theorem states that every finite $2k$-regular graph decomposes into $k$ edge-disjoint $2$-factors~\cite{petersen1891}, and the same holds for infinite $2k$-regular graphs by compactness; see~\cite{diestel2025}. In the infinite setting, the structure of these $2$-factors becomes relevant for linear arboricity. If a $2$-factorization can be chosen so that every component of every factor is a double ray, then the factors are themselves linear forests, and the degree lower bound immediately gives $\la(G)=k$. On the other hand, there are infinite regular graphs in which every $2$-factor contains a finite cycle. For such graphs, a $2$-factorization cannot directly yield a decomposition into $k$ linear forests. Figure \ref{fig2} gives such an example already in the $4$-regular case.  The next proposition gives a class of $2k$-regular graphs with linear arboricity greater than $k$, including the graph in Figure \ref{fig2}.

\begin{figure}[H]
	\centering
	\begin{tikzpicture}[scale=.58, every node/.style={circle,fill=black,inner sep=1.7pt}, line cap=round]
		% top finite block
		\coordinate (a) at (-1.4,6.8); \coordinate (b) at (0,6.8); \coordinate (c) at (1.4,6.8);
		\coordinate (d) at (-1.4,4.9); \coordinate (e) at (1.4,4.9);
		\coordinate (f) at (-.45,5.75); \coordinate (g) at (.45,5.75);
		\coordinate (h) at (0,3.75);
		\draw (a)--(b)--(c)--(e)--(d)--(a);
		\draw (a)--(f)--(b); \draw (a)--(g)--(c); \draw (b)--(g); \draw (c)--(f); \draw (d)--(f); \draw (e)--(g);
		\draw (d)--(h)--(e);
		\foreach \p in {a,b,c,d,e,f,g,h} \node at (\p) {};
		
		% infinite chain of crossed squares
		\coordinate (l0) at (-.75,2.55); \coordinate (r0) at (.75,2.55);
		\coordinate (l1) at (-.75,1.45); \coordinate (r1) at (.75,1.45);
		\coordinate (l2) at (-.75,.15); \coordinate (r2) at (.75,.15);
		\coordinate (l3) at (-.75,-.95); \coordinate (r3) at (.75,-.95);
		\coordinate (l4) at (-.75,-2.25); \coordinate (r4) at (.75,-2.25);
		\coordinate (l5) at (-.75,-3.35); \coordinate (r5) at (.75,-3.35);
		\draw (h)--(l0) (h)--(r0);
		\draw (l0)--(r0)--(r1)--(l1)--cycle; \draw (l0)--(r1) (r0)--(l1);
		\draw (l1)--(l2) (r1)--(r2) (l2)--(r2);
		\draw (l2)--(r3) (r2)--(l3) (l2)--(l3) (r2)--(r3) (l3)--(r3);
		\draw (l3)--(l4) (r3)--(r4) (l4)--(r4);
		\draw (l4)--(r5) (r4)--(l5) (l4)--(l5) (r4)--(r5) (l5)--(r5);
		\draw (l5)--++(0,-1.25) (r5)--++(0,-1.25);
		\foreach \p in {l0,r0,l1,r1,l2,r2,l3,r3,l4,r4,l5,r5} \node at (\p) {};
	\end{tikzpicture}
	\caption{A 4-regular graph in which every 2-factor contains a finite cycle.}
	\label{fig2}
\end{figure}

\begin{proposition}
Let $G$ be a $2k$-regular graph. If there is a vertex $v\in V(G)$ such that a component of $G-v$ is finite, then every $2$-factor of $G$ contains a finite cycle. In particular, $\la(G)\geq k+1$.
\end{proposition}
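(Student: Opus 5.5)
Let $C$ be a finite component of $G-v$ and let $H$ be a $2$-factor of $G$. The plan is to show that $H$ has a finite cycle through $v$ or inside $C$, by a parity count on $C$. Then I deduce the bound on $\la(G)$ from the observation in the proof of Theorem \ref{oneend}: a decomposition into $k$ linear forests is forced to be a $2$-factorization.

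\emph{Parity step.} Every vertex of $C$ has degree $2k$ in $G$. Its neighbours outside $C$ can only be $v$, since $C$ is a component of $G-v$. Let $m$ be the number of edges from $v$ to $C$. Summing degrees over $C$ gives
\[
2k|V(C)| = 2|E(G[C])| + m,
\]
so $m$ is even. Now count inside $H$. Each vertex of $C$ has $H$-degree $2$, and the same argument gives
\[
2|V(C)| = 2|E(H[C])| + m_H,
\]
where $m_H$ is the number of $H$-edges between $v$ and $C$. Thus $m_H\in\{0,2\}$, because $d_H(v)=2$.

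\emph{Locating the finite cycle.} If $m_H=0$, then $H[C]$ is a finite $2$-regular graph, since $C\neq\varnothing$. It therefore contains a cycle, which is a finite cycle of $H$. If $m_H=2$, consider the component $K$ of $H$ containing $v$. Both $H$-edges at $v$ go into $C$. Every vertex of $C$ has all its $H$-neighbours in $C\cup\{v\}$. Hence $K\subseteq C\cup\{v\}$, so $K$ is a finite $2$-regular connected graph, that is, a finite cycle. In both cases $H$ contains a finite cycle.

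\emph{Consequence for $\la(G)$.} Suppose $\la(G)\le k$, and take a decomposition into $k$ linear forests $F_1,\dots,F_k$, adding empty classes if needed. As in the proof of Theorem \ref{oneend},
\[
2k=\sum_{i=1}^k d_{F_i}(v)
\]
with each summand at most $2$, so $d_{F_i}(v)=2$ for all $i$ and all $v$. Each $F_i$ is then a $2$-factor of $G$ that is a forest, which contradicts the first part. Hence $\la(G)\ge k+1$.

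The only step needing care is making sure the case $m_H=2$ really closes up inside $C\cup\{v\}$. This holds because all edges leaving $C$ go to $v$, and $v$ has no $H$-edges leaving $C\cup\{v\}$; the rest is bookkeeping.
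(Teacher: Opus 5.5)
Your proof is correct and follows the paper's argument. The parity count gives $m_H\in\{0,2\}$, each case produces a finite cycle of $H$, and the bound on $\la(G)$ follows because $k$ linear forests decomposing a $2k$-regular graph must all be $2$-factors. Your handling of the case $m_H=2$, showing that the $H$-component of $v$ is trapped in $C\cup\{v\}$, is a slightly cleaner form of the paper's ``path in $H[C]$ plus the two cut edges and $v$'' step, and your extra parity computation of $m$ in $G$ is harmless but not needed.
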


\begin{proof}
Let $C$ be a finite component of $G-v$, and let $H$ be a $2$-factor of $G$. Since every edge leaving $C$ has its other endpoint at $v$, the cut $\delta_H(C)$ has size at most two. Moreover, $|\delta_H(C)|$ has the same parity as $\sum_{x\in C}d_H(x)=2|C|$, so $|\delta_H(C)|$ is either zero or two. If it is zero, then $H[C]$ is a finite $2$-regular graph and therefore contains a finite cycle. If it is two, then either $H[C]$ already contains a cycle or its nontrivial component is a path joining the two vertices incident with $\delta_H(C)$; in the latter case this path, together with the two cut edges and $v$, is a finite cycle of $H$.

If $\la(G)=k$, then in every decomposition of the $2k$-regular graph $G$ into $k$ linear forests, every vertex has degree two in every color. Each color class would therefore be a $2$-factor, contradicting the first part. Hence $\la(G)\geq k+1$.
\end{proof}

We now turn to $2k$-regular graphs of large girth. The role of the girth condition is that it allows the finite cycles occurring in a $2$-factorization to be simultaneously broken by a matching. Our goal is to show that, under the same girth assumption, the upper bound known in the finite setting remains valid for topological linear arboricity. The finite result behind this argument is due to Mishra.

\begin{theorem}[Mishra~\cite{mishra2025}]
	Let $G$ be a finite $2k$-regular graph with girth $g(G)\geq2k$. Then $\la(G)\leq k+1$.
\end{theorem}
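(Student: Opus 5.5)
This statement is Mishra's theorem for finite graphs, cited from~\cite{mishra2025}, so within the paper it is used as a black box. If I had to reprove it, I would follow the strategy hinted at just before the statement: the girth condition lets the finite cycles of a $2$-factorization be broken simultaneously by one matching.

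\textbf{Step 1: the $2$-factorization.} By Petersen's theorem, decompose $G$ into $k$ edge-disjoint $2$-factors $H_1,\ldots,H_k$. Every component of every $H_i$ is a cycle. Since $g(G)\geq 2k$, each such cycle has length at least $2k$.

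\textbf{Step 2: the goal.} I would look for a matching $N$ of $G$ meeting every cycle of every $H_i$. Then each $H_i-N$ is a linear forest, since removing at least one edge from every cycle leaves paths. Together with $N$, which is itself a linear forest, this gives a decomposition into $k+1$ linear forests, so $\la(G)\leq k+1$. A slightly weaker structure also suffices: any $N$ whose components are paths works, as in Lemma~\ref{lema2}.

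\textbf{Step 3: choosing the matching.} This is a system-of-representatives problem. From each cycle $C$ we must pick one edge $e_C\in E(C)$, with the chosen edges pairwise vertex-disjoint. Each vertex lies in exactly one cycle of each $H_i$, so in $k$ cycles in total. An edge $e$ of a cycle $C$ conflicts with the edges sharing an endpoint with it, namely at most $2(2k-1)$ edges. Each cycle offers at least $2k$ candidate edges. I would try Haxell's independent transversal theorem on the line graph, or the Lovász Local Lemma with one uniformly random edge per cycle, to get a conflict-free choice.

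\textbf{Main obstacle.} The counts in Step 3 are tight: the conflict degree, roughly $4k$, versus the class size $2k$ is not enough for the plain Haxell bound, which needs class size at least twice the maximum degree. So the last step would need a finer argument. I would first try relaxing the target: allow $N$ to have components that are short paths rather than a matching. This cuts the constraints down to avoiding vertices of degree $3$ in $N$ and avoiding cycles in $N$. I would then hope that girth $\geq 2k$ makes the dependency count fit, giving a transversal of bounded component size.
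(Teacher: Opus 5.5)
The paper does not reprove this theorem; it imports it from Mishra. In Lemma~\ref{lem} it records that Mishra's proof yields $2$-factors $Q_1,\dots,Q_k$ together with a matching $M$ meeting every cycle of every $Q_i$. The bound then follows exactly as in your Step~2, so your Steps 1--2 are the right reduction.

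The gap is that Step~3 \emph{is} the theorem, and you have no argument for it. You already see that Haxell's criterion fails. The Local Lemma fails too: Alon's version of precisely this random-edge-per-cycle argument needs the girth to be a large constant multiple of the degree.

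This is not a matter of constants. Suppose every cycle of the factorization has length exactly $2k$, as for $C_4\square C_4$ with rows and columns as the two factors ($k=2$). Then there are $k\cdot n/(2k)=n/2$ cycles, and every edge lies in exactly one of them. So $N$ must be a \emph{perfect} matching containing exactly one edge of each cycle. At girth $2k$ the problem has no slack, so a local or random choice cannot be expected to succeed. A global argument is needed, and possibly the freedom to construct the factorization and the matching together, which is how Lemma~\ref{lem} is phrased. You instead fix an arbitrary Petersen factorization in Step~1 and give no reason why every factorization admits such a matching.

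Your relaxation to a linear forest $N$ removes this rigidity, but as written it is only a hope. Part of it can in fact be done. Orient each cycle of each $H_i$ cyclically, and form the $k$-regular bipartite graph on $V^+\cup V^-$ with an edge $u^+v^-$ for each arc $u\to v$. Each cycle becomes a matching of size at least $2k$. Any $t$ of them span at least $2kt$ edges of a bipartite graph of maximum degree $k$, so they contain a matching of size at least $2t>2(t-1)$. The Aharoni--Haxell Hall theorem for hypergraphs then picks one arc per cycle with all tails distinct and all heads distinct. Such an $N$ meets every cycle and has maximum degree $2$, but it may contain cycles. Excluding them, which is the acyclicity constraint you list, is exactly the missing step.
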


For the infinite case, we need more than the numerical conclusion of this theorem. The proof in~\cite{mishra2025} yields the following factorization together with a matching meeting all of its finite cycles.

\begin{lemma}[\cite{mishra2025}]
	\label{lem}
	Let $H$ be a finite $2k$-regular graph of girth at least $2k$. Then there are pairwise edge-disjoint $2$-factors $Q_1,\ldots,Q_k$ covering $E(H)$ and a matching $M$ that meets every cycle of every $Q_i$.
\end{lemma}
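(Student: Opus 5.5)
The plan is to start from an arbitrary $2$-factorization and then construct the matching by an extremal (maximal counterexample) argument, so that all the difficulty is concentrated in one exchange step. By Petersen's $2$-factor theorem, $H$ decomposes into edge-disjoint $2$-factors $Q_1,\ldots,Q_k$. Every cycle of every $Q_i$ is a cycle of $H$, so it has length at least $g(H)\geq 2k$. Let $\mathcal{C}$ be the finite family of all cycles of all the $Q_i$. We say a matching $M$ meets $C\in\mathcal{C}$ if $E(C)\cap M\neq\varnothing$; this is the meaning needed later, since deleting $M$ from $Q_i$ must break every finite cycle. Two counting facts will be used throughout:
\begin{itemize}
\item every vertex lies on exactly $k$ members of $\mathcal{C}$, one in each $Q_i$;
\item distinct members of $\mathcal{C}$ are edge-disjoint.
\end{itemize}

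Among all matchings $M$ with $M\subseteq\bigcup\mathcal{C}=E(H)$, I would choose one meeting the maximum number of members of $\mathcal{C}$. Among those, I would take one with $|M|$ minimal, so that every edge of $M$ is the unique $M$-edge of some cycle it is responsible for. Suppose some $C\in\mathcal{C}$, say of length $\ell\geq 2k$, is not met.
\begin{itemize}
\item Every edge of $C$ has an endpoint covered by $M$, since otherwise we could add that edge to $M$ and meet more cycles.
\item Hence the covered vertices form a vertex cover of the cycle $C$, so at least $\ell/2\geq k$ vertices of $C$ are matched.
\item Each such matched vertex $x$ is covered by an edge $f_x\in M$ that is not in $C$. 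By minimality, $f_x$ is the sole $M$-edge of some cycle $C_x\neq C$ through $x$.
\end{itemize}

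The key step is an exchange. We try to remove some $f_x$ and, in its place, add an edge of $C$ at $x$ together with a replacement edge on $C_x$, so that the number of met cycles increases. The replacement on $C_x$ must avoid the finitely many vertices currently blocked. Here the girth bound should be what makes room. The set of vertices on $C$ or $C_x$ blocked by the other edges of $M$ near $x$ has size controlled by the $k$ cycles through each vertex. Meanwhile $C_x$ has at least $2k$ edges, and because $g(H)\geq 2k$ the cycles $C$ and $C_x$ cannot share a long stretch of short chords. This should leave an edge of $C_x$ at distance at least $2$ from all other $M$-edges, once the $\geq k$ choices of $x$ on $C$ are exploited by pigeonhole.

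I expect this exchange count to be the main obstacle. The naive greedy bound fails: a cycle of length $2k$ can in principle have all its vertices blocked by the $k-1$ other cycles through each vertex. So the argument must use both the alternation on $C$ (only half the vertices need be blocked) and the freedom in choosing which $f_x$ to reroute. If the exchange cannot be made rigorous, I would fall back on re-choosing the $2$-factorization itself. In each component of $Q_i\cup Q_j$ one can swap factors along alternating cycles, and the aim would be to shorten the number of cycles until the greedy count succeeds. This is the route I believe Mishra's original proof takes.
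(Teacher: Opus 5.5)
The paper does not prove this lemma. It extracts it from Mishra's argument, which, as the paper puts it, ``yields the following factorization together with a matching''. Judged on its own, your proposal has a genuine gap. The parts you actually prove are routine: every edge of the unmet cycle $C$ has a covered endpoint, and each $f_x$ is the unique $M$-edge of its cycle $C_x$. All the content of the lemma lies in the exchange step, which you only sketch (\emph{should leave an edge of $C_x$}, \emph{exploited by pigeonhole}). The fallback of switching factors inside $Q_i\cup Q_j$ has no decreasing quantity and no argument that the greedy step then succeeds.

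A one-step local swap cannot be the whole story. Suppose every cycle of every $Q_i$ has length exactly $2k$, as in $K_{4,4}$ split into four $4$-cycles. Then $|\mathcal{C}|=k\cdot n/(2k)=n/2$, so a matching meeting all cycles must be perfect with exactly one edge in each cycle: there is no slack at all. The girth hypothesis gives exactly the fractional version. Weight $1/|C|$ on each edge of each $C\in\mathcal{C}$ gives every cycle total weight $1$ and vertex loads at most $k\cdot 2/(2k)=1$. So counting is exhausted here, and the real difficulty is integrality.

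In this tight case your extremal $M$ has $n/2-1$ edges and exactly two exposed vertices $z,w$. Removing $f_x=xy$ and adding one edge of $C$ at $x$ and one edge of $C_x$ forces, say, $xz\in E(C)$ and $yw\in E(C_x)$. There is no reason for this to hold. If neither exposed vertex lies on $C$, no swap of your type exists at all, since both $C$-neighbours of every $x$ remain covered.

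In general you would need arbitrarily long chains of swaps, that is, an augmenting-path or parity argument. It is not clear that the required matching exists for an arbitrary Petersen factorization. What is missing is a genuinely global argument, or a construction that chooses the factorization and the matching together.
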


The preceding lemma provides the structure that we will pass to the infinite setting by means of $F$-limits. For this purpose, we need finite $2k$-regular graphs containing prescribed finite subgraphs of the original graph while preserving the same lower bound on the girth. This is guaranteed by the following completion result of Mishra.

\begin{lemma}[Mishra~\cite{mishra2025}]
	\label{lem2}
	Let $H$ be a finite graph with $\Delta(H)\leq d$ and girth at least $g$. Then $H$ is contained as an induced subgraph in a finite $d$-regular graph of girth at least $g$.
\end{lemma}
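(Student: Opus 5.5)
The plan is to build the regular supergraph as a "twisted product" of many copies of $H$, indexed by the vertices of a finite Cayley graph of large girth. Write $\mathrm{def}(v)=d-d_H(v)\geq 0$ for each $v\in V(H)$. Let $\Sigma$ be the set of all pairs $(v,j)$ with $v\in V(H)$ and $1\leq j\leq \mathrm{def}(v)$, and put $D=|\Sigma|$. If $D=0$, then $H$ itself is $d$-regular and there is nothing to prove, so assume $D\geq 1$.

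First, I fix a finite group $\Gamma$ generated by involutions $s_\sigma$ ($\sigma\in\Sigma$), pairwise distinct and different from the identity, such that no nontrivial freely reduced word of length less than $g$ in these generators equals $1$ in $\Gamma$. Such a group exists by residual finiteness of the free product $W=\ast_{\sigma\in\Sigma}\mathbb{Z}/2$. Only finitely many nontrivial reduced words of length $<g$ exist. Each of them represents a nontrivial element of $W$, because distinct reduced words in a free product of copies of $\mathbb{Z}/2$ represent distinct elements. Hence some finite quotient of $W$ keeps all of them nontrivial.

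Next, I define $X$ with vertex set $V(H)\times\Gamma$. The edges of $X$ are of two kinds:
\begin{itemize}
\item \emph{internal edges} $(u,x)(w,x)$ for every $uw\in E(H)$ and every $x\in\Gamma$;
\item \emph{cross edges} $(v,x)(v,xs_\sigma)$ for every $\sigma=(v,j)\in\Sigma$ and every $x\in\Gamma$.
\end{itemize}
A cross edge is well defined as an undirected edge because $s_\sigma^2=1$, and it is not a loop because $s_\sigma\neq 1$. There are no parallel edges because the generators are distinct. Hence each vertex $(v,x)$ has degree $d_H(v)+\mathrm{def}(v)=d$. The copy $V(H)\times\{1\}$ induces exactly $H$, because cross edges always join different copies. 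So $H$ is an induced subgraph of the finite $d$-regular graph $X$.

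The remaining step, and the main one, is the girth bound. Let $C$ be a cycle in $X$. If $C$ uses no cross edge, it lies in a single copy of $H$, so its length is at least $g$. Otherwise, read off the cyclic sequence of generators $s_{\sigma_1}\cdots s_{\sigma_\ell}$ of the cross edges of $C$, in the order they are traversed. Going once around $C$ returns to the starting copy, so this word equals $1$ in $\Gamma$.

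I claim the word is cyclically reduced, and argue by contradiction. Suppose two cyclically consecutive letters were equal, say both equal to $s_\sigma$ with $\sigma=(v,j)$. Then $C$ would traverse $(v,x)\to(v,xs_\sigma)$, then a possibly empty internal path inside copy $xs_\sigma$, then the $s_\sigma$-edge at $v$ again. But the only $s_\sigma$-edges leaving copy $xs_\sigma$ go from vertex $v$, since each label $\sigma$ belongs to a single vertex of $H$. So the internal path runs from $(v,xs_\sigma)$ back to $(v,xs_\sigma)$ and must be empty. The cycle would then reuse the same edge, or revisit a vertex, which is impossible.

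So the word is a nontrivial cyclically reduced word equal to $1$, and by the choice of $\Gamma$ we get $\ell\geq g$. Therefore $|C|\geq\ell\geq g$. The care needed here is exactly the reason for using one generator per deficiency slot $(v,j)$. If two different vertices shared a generator, short cycles of the form $(v,x)\to(v,xs)\to(u,xs)\to(u,x)\to(v,x)$ could appear.
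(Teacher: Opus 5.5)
The paper gives no proof of this lemma. It is imported from Mishra and used as a black box in Theorem~\ref{girth} and the corollary after it, so there is no internal argument to compare yours with. Your construction is a correct, self-contained proof of the statement.

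The pieces all check out:
\begin{itemize}
\item Lifting $H$ along a finite involution-generated group $\Gamma$ in which short reduced words survive gives a simple graph. Each vertex gets degree $d_H(v)+\mathrm{def}(v)=d$, and the copy over $1$ is induced.
\item In the key girth step, each slot $\sigma=(v,j)$ belongs to a single vertex, and each vertex $(v,z)$ carries exactly one $\sigma$-edge. So two cyclically consecutive cross edges of a cycle with the same label would have to be the same edge traversed back and forth.
\item Hence every cycle using cross edges reads a nontrivial reduced word equal to $1$ in $\Gamma$, and so has length at least $g$.
\end{itemize}

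The only non-elementary input is the residual finiteness of $\ast_{\sigma\in\Sigma}\mathbb{Z}/2$. You should cite it explicitly: it follows from Gruenberg's theorem on free products of residually finite groups, or from the fact that this group has an index-$2$ free subgroup.

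There is one small repair. Pairwise distinctness of the $s_\sigma$, which you need to rule out parallel edges, is a statement about the words $s_\sigma s_\tau$ of length $2$. Your condition covers these only when $g\geq 3$. Since every simple graph has girth at least $3$, you may replace $g$ by $\max\{g,3\}$, or simply add these words to your finite list.

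Compared with the completion the paper uses for cubic graphs, your argument also explains why a separate lemma is needed here. The naive doubling there joins corresponding deficient vertices of two copies, which creates $4$-cycles $u\,v\,v'\,u'$ whenever $uv\in E(H)$. Assigning one generator per deficiency slot and taking a group of large girth is exactly what removes this obstruction.
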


\begin{theorem}
	\label{girth}
Let $G$ be a $2k$-regular graph with girth $g(G)\geq2k$. Then $\latop(G)\leq k+1$.
\end{theorem}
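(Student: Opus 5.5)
We pass Lemma \ref{lem} to the infinite graph with $F$-limits, and then break the remaining double rays with Lemma \ref{lema2}. As usual, we treat each connected component separately, so we may assume $G$ is connected and hence countable. Enumerate $V(G)=\{v_0,v_1,\ldots\}$ and let $H_n=G[\{v_0,\ldots,v_n\}]$. Each $H_n$ has maximum degree at most $2k$ and girth at least $2k$. By Lemma \ref{lem2}, $H_n$ is an induced subgraph of a finite $2k$-regular graph $G_n$ of girth at least $2k$. Lemma \ref{lem} then gives $2$-factors $Q_1^n,\ldots,Q_k^n$ of $G_n$ and a matching $M^n$ meeting every cycle of every $Q_i^n$. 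Let $Q_i$ be the $F$-limit of $\langle Q_i^n\cap G\rangle_n$ and $M$ the $F$-limit of $\langle M^n\cap G\rangle_n$.

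The finite-intersection argument of Theorem \ref{equiv} shows that $Q_1,\ldots,Q_k$ are pairwise edge-disjoint, cover $E(G)$ by the ultrafilter property, and have maximum degree at most $2$. The same argument shows that $M$ is a matching. Degrees in $G$ are all $2k$, and since every edge lies in some $Q_i$ with $d_{Q_i}\le 2$, each $Q_i$ is $2$-regular. Hence every component of $Q_i$ is a finite cycle or a double ray.

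The key step is that $M$ meets every finite cycle $C$ of every $Q_i$. Since $C$ is finite, the set $A$ of indices $n$ with $C\subseteq Q_i^n$ lies in $F$. For each $n\in A$, the graph $Q_i^n$ is $2$-regular, so $C$ is a whole component of $Q_i^n$, and $M^n$ therefore contains an edge of $C$. Because $C$ has finitely many edges, the ultrafilter property gives a single edge $e\in E(C)$ with $\{n: e\in M^n\}\in F$, and so $e\in M$. Passing to $G$ is harmless here: $C$ and its edges lie in $H_n\subseteq G$ for cofinitely many $n$, and we only intersect with $G$.

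Next we handle the double rays. Apply Lemma \ref{lema2} to $M$ and the countable family of double-ray components of the $Q_i$, which are pairwise edge-disjoint. This gives a matching $N$ such that $M\cup N$ meets every double ray and every finite cycle, and every component of $M\cup N$ is a path with at most three edges. Set $F_i=Q_i-(M\cup N)$ and $F_{k+1}=M\cup N$. Each $F_i$ for $i\le k$ is obtained from cycles and double rays by deleting at least one edge from each. Its components are therefore finite paths or rays, and it is a topological linear forest by Lemma \ref{lema1}. The forest $F_{k+1}$ has only finite path components, so Lemma \ref{lema1} applies to it as well. This gives $\latop(G)\le k+1$.

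The main obstacle is the cycle-breaking step. One must make sure that a finite cycle of $Q_i$ really is a component of $Q_i^n$, which uses the $2$-regularity of the finite factors. One must also check that $M$ is defined with respect to edges of $G$ only, so that edges of $G_n$ outside $G$ do not interfere. Both points are handled by intersecting with $G$ before taking limits.
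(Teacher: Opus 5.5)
Your proposal is correct and follows essentially the same route as the paper: $F$-limits of Mishra's $2$-factors and cycle-meeting matchings on the completions $G_n\supseteq H_n$, the observation that a finite cycle of a limit factor is a whole component of $Q_i^n$ for $F$-many $n$, and then Lemma~\ref{lema2} and Lemma~\ref{lema1} to break the double rays. The only minor differences are harmless. You get $2$-regularity of each $Q_i$ by degree counting ($2k=\sum_i d_{Q_i}(v)$ with every term at most $2$) instead of the paper's induced-subgraph and pigeonhole argument, and you pass all double-ray components to Lemma~\ref{lema2} rather than only those not met by $M$.
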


\begin{proof}
It is enough to work in one connected component, which is countable. Enumerate its vertices as $v_0,v_1,\ldots$, and let $H_n=G[\{v_0,\ldots,v_n\}]$. By Lemma \ref{lem2}, each $H_n$ is an induced subgraph of a finite $2k$-regular graph $G_n$ of girth at least $2k$.

By Lemma \ref{lem}, choose pairwise edge-disjoint $2$-factors $Q_1^n,\ldots,Q_k^n$ covering $E(G_n)$ and a matching $M_n$ meeting every cycle of every $Q_i^n$. For $1\leq i\leq k$, let $F_i$ be the $F$-limit of $\langle Q_i^n\cap G\rangle_n$, and let $M$ be the $F$-limit of $\langle M_n\cap G\rangle_n$.

The usual ultrafilter argument shows that $F_1,\ldots,F_k$ are pairwise edge-disjoint and cover $E(G)$. We verify that each $F_i$ is a $2$-factor. Fix $v\in V(G)$. For all sufficiently large $n$, the graph $H_n$ contains $v$ and all its neighbors in $G$. Since $H_n$ is induced, $v$ already has degree $2k$ in $H_n$, and therefore no edge added in the completion $G_n$ is incident with $v$. Thus $Q_i^n$ uses exactly two edges of $G$ at $v$. Since only finitely many pairs of edges are incident with $v$, there is a pair ${e,f}$ such that the set of indices $n$ for which ${e,f}\subseteq E(Q_i^n)$ belongs to $F$. Hence $e,f\in E(F_i)$, and therefore $d_{F_i}(v)=2$.

The $F$-limit $M$ is a matching. We claim that it meets every finite cycle contained in any $F_i$. Let $C\subseteq F_i$ be a finite cycle. The set of indices for which $C\subseteq Q_i^n$ belongs to $F$. For each such index, $M_n$ contains an edge of $C$. Since $C$ has finitely many edges, one edge of $C$ belongs to $M$.

Let $\mathcal{D}$ be the family of double-ray components of the factors $F_i$ that are not met by $M$. This is a countable family of pairwise edge-disjoint double rays. By Lemma \ref{lema2}, there is a matching $N$ such that every member of $\mathcal{D}$ meets $M\cup N$ and every component of $L=M\cup N$ has at most three edges. Hence $L$ is a topological linear forest.

For $1\leq i\leq k$, let $L_i=F_i-L$. Every finite cycle of $F_i$ meets $M\subseteq L$, and every double-ray component of $F_i$ meets $L$. Thus $L_i$ is a linear forest with no double-ray component, and Lemma \ref{lema1} implies that it is topological. The graphs $L_1,\ldots,L_k,L$ are pairwise edge-disjoint and their edge sets cover $E(G)$. Therefore $\latop(G)\leq k+1$.
\end{proof}

\begin{corollary}
Let $G$ be a graph with $\Delta(G)=2k$ and girth $g(G)\geq\Delta(G)$. Then $\latop(G)\leq k+1$.
\end{corollary}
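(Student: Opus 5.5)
The plan is to rerun the proof of Theorem \ref{girth} with the regular graph replaced by $G$ itself. The regularity of $G$ was used there only to show that each $F_i$ is a $2$-factor, and this is not needed for the conclusion. It suffices to show that the final forests $L_1,\ldots,L_k,L$ have no double-ray components. Lemma \ref{lema1} then applies inside $G$ itself, so we never have to compare end spaces of $G$ with those of some regular supergraph. This is why I prefer to avoid first embedding $G$ into an infinite $2k$-regular graph of girth at least $2k$ and restricting. Such a restriction would also give linear forests without double rays, but it needs a countable embedding result that the paper does not provide.

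First, work in a connected component of $G$, which is countable since $\Delta(G)$ is finite. Enumerate its vertices as $v_0,v_1,\ldots$ and put $H_n=G[\{v_0,\ldots,v_n\}]$. Then $\Delta(H_n)\leq 2k$ and $H_n$ has girth at least $2k$, so Lemma \ref{lem2} embeds $H_n$ as an induced subgraph of a finite $2k$-regular graph $G_n$ of girth at least $2k$. Lemma \ref{lem} gives $2$-factors $Q_1^n,\ldots,Q_k^n$ of $G_n$ and a matching $M_n$ meeting every cycle of every $Q_i^n$. Let $F_i$ be the $F$-limit of $\langle Q_i^n\cap G\rangle_n$ and $M$ the $F$-limit of $\langle M_n\cap G\rangle_n$. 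The finite-intersection argument from Theorem \ref{equiv} shows the following. The $F_i$ are pairwise edge-disjoint and cover $E(G)$, since every edge lies in $H_n$ for cofinitely many $n$. Each $F_i$ has maximum degree at most $2$, since three edges of $F_i$ at one vertex would lie together in some $Q_i^n$. Finally, $M$ is a matching. So each component of $F_i$ is a finite path, a ray, a double ray or a finite cycle.

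Next, let $C\subseteq F_i$ be a finite cycle. The set of $n$ with $C\subseteq Q_i^n$ lies in $F$, and for each such $n$ some edge of $C$ lies in $M_n$. Since $C$ has finitely many edges, one fixed edge of $C$ lies in $M_n$ for an $F$-large set of $n$, hence lies in $M$. Let $\mathcal{D}$ be the countable family of double-ray components of the $F_i$ not met by $M$. Lemma \ref{lema2} gives a matching $N$ such that $L=M\cup N$ meets every member of $\mathcal{D}$ and has components with at most three edges. Put $L_i=F_i-L$. Removing at least one edge from each cycle and each double ray of $F_i$ leaves only finite paths and rays. Thus none of $L_1,\ldots,L_k,L$ has a double-ray component, and Lemma \ref{lema1} in $G$ shows that all $k+1$ of them are topological linear forests decomposing $E(G)$. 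Taking unions over components gives $\latop(G)\leq k+1$.

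The only delicate point is that the $F_i$ are no longer $2$-factors, so finite cycles may sit alongside paths and rays. The step that needs checking is therefore that $M$ still meets every finite cycle of every $F_i$. As shown above, the argument from Theorem \ref{girth} goes through verbatim, because it only uses that such a cycle lies in $Q_i^n$ for $F$-many $n$.
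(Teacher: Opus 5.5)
Your proposal is correct and follows the paper's proof: embed each $H_n$ via Lemma \ref{lem2}, apply Lemma \ref{lem}, take $F$-limits of the restricted factors and matchings, and finish as in Theorem \ref{girth}. You also make explicit a point the paper's one-line reference to Theorem \ref{girth} glosses over: the $F_i$ are no longer $2$-factors, only graphs of maximum degree at most $2$, so finite paths and rays can appear alongside cycles and double rays, and the cycle-hitting and double-ray-hitting steps still work as you show.
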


\begin{proof}
It is enough to work in one connected component. Enumerate its vertices as $V(G)=\{v_0,v_1,\ldots\}$, let $H_n=G[\{v_0,\ldots,v_n\}]$, and use Lemma \ref{lem2} to embed $H_n$ as an induced subgraph of a finite $2k$-regular graph $G_n$ of girth at least $2k$. Apply Lemma \ref{lem} in every $G_n$ and take $F$-limits after restricting the resulting factors and matchings to $G$. The proof of Theorem~\ref{girth}  gives an edge decomposition into at most $k+1$ topological linear forests.
\end{proof}

We conclude with two questions suggested by the results of this paper. The first concerns the optimal general upper bound for topological linear arboricity.

\begin{question}
	\label{ques1}
Does every graph $G$ with finite maximum degree $\Delta(G)$ satisfy
$$
\left\lceil\frac{\Delta(G)}{2}\right\rceil
\leq \latop(G)
\leq
\left\lceil\frac{\Delta(G)+1}{2}\right\rceil?
$$

\end{question}

The Question \ref{ques1} is natural in view of the bounds obtained above. For infinite cubic graphs, as well as for infinite $2k$-regular graphs of girth at least $2k$, the upper bounds for topological linear arboricity coincide with the corresponding finite bounds. It is therefore natural to ask whether the same upper bound holds for every graph of finite maximum degree.

Our results also show that connected infinite cubic graphs have linear arboricity $2$, and hence admit an edge decomposition into two linear forests. There is, however, no reason for the two forests obtained in such a decomposition to be isomorphic. This suggests asking whether the decomposition can be chosen with this additional symmetry. In the finite setting, Wormald asked whether every connected cubic graph whose number of vertices is divisible by $4$ admits an edge decomposition into two isomorphic linear forests~\cite[Problem~13]{wormald1987}. This leads naturally to the following infinite analogue.

\begin{question}
	Does every connected infinite cubic graph admit an edge decomposition into two isomorphic linear forests?
\end{question}

The passage from the finite problem to the infinite one is less immediate than for linear arboricity conjecture. In particular, the $F$-limit construction used earlier does not automatically preserve isomorphism between the two limiting forests, since isomorphic forests at each finite stage may have non-isomorphic $F$-limits. It would therefore be interesting to determine whether the finite decompositions can be chosen with enough additional structure to retain isomorphism in the limit. Related finite versions of the isomorphic-decomposition problem were studied in~\cite{fouquet2009,kronenberg2022}.

\section*{Acknowledgments}
The first and third authors were supported by Fundação de Amparo à Pesquisa do Estado de São Paulo (FAPESP), under grant numbers 2025/12199-3 and 2024/01755-0, respectively. The second author was supported by Coordenação de Aperfeiçoamento de Pessoal de Nível Superior (CAPES), Finance Code 001.

\bibliographystyle{plain}
\bibliography{arboricity}

\Addresses
\end{document}